\newtheorem{theorem}{Theorem}  [section]
\newtheorem{definition} [theorem] {Definition}
\newtheorem{lemma}[theorem]{Lemma}
\newtheorem{proposition}[theorem]{Proposition}
\newtheorem{remark} [theorem] {Remark}
\newtheorem{conjecture} [theorem] {Conjecture}
\numberwithin{equation}{section} \everymath{\displaystyle}
\newcommand{\Cont}{{\rm C}}
\newcommand{\Tr}{{\rm Tr}}
\newcommand{\Ram}{{\rm Ram}}
\newcommand{\id}{\mathbbm{1}}
\newcommand{\gp}[1]{\mathbf{#1}}
\newcommand{\GL}{{\rm GL}}
\newcommand{\PGL}{{\rm PGL}}
\newcommand{\PD}{{\rm P}\mathcal{D}^{\times}}
\newcommand{\SL}{{\rm SL}}
\newcommand{\PSL}{{\rm PSL}}
\newcommand{\gO}{{\rm O}}
\newcommand{\SO}{{\rm SO}}
\newcommand{\ud}{\mathrm{d}}
\newcommand{\ag}[1]{\mathbb{#1}}
\newcommand{\Z}{\mathbb{Z}}
\newcommand{\B}{\mathcal{B}}
\newcommand{\D}{\mathcal{D}}
\newcommand{\Mat}{{\rm M}}
\newcommand{\Q}{\mathbb{Q}}
\newcommand{\R}{\mathbb{R}}
\newcommand{\C}{\mathbb{C}}
\newcommand{\E}{\mathbf{E}}
\newcommand{\F}{\mathbf{F}}
\newcommand{\A}{\mathbb{A}}
\newcommand{\vO}{\mathcal{O}}
\newcommand{\vo}{\mathfrak{o}}
\newcommand{\VP}{\mathfrak{P}}
\newcommand{\vP}{\mathcal{P}}
\newcommand{\vp}{\mathfrak{p}}
\newcommand{\idlN}{\mathfrak{N}}
\newcommand{\oJ}{\mathfrak{J}}
\newcommand{\oM}{\mathfrak{M}}
\newcommand{\norm}[1][\cdot]{\lvert #1 \rvert}
\newcommand{\extnorm}[1]{\left\lvert #1 \right\rvert}
\newcommand{\fin}{{\rm fin}}
\newcommand{\JL}{\mathbf{JL}}
\newcommand{\Vol}{{\rm Vol}}
\newcommand{\rmnum}[1]{\romannumeral #1}
\newcommand{\Rmnum}[1]{\expandafter\@slowromancap\romannumeral #1@}
\newcommand{\Op}{{\rm Op}}
\title{Prime Geodesic Theorem for Arithmetic Compact Surfaces: Principal Congruence Case}
\author{Chenhao Tang}
\address{Morningside Center of Mathematics, Academy of Mathematics and Systems Science, Chinese Academy of Sciences; University of the Chinese Academy of Sciences, Beijing 100190, P.R. China}
\email{tangchenhao25@mails.ucas.ac.cn}
\author{Han Wu}
\address{Hangzhou International Innovation Institute, Beihang University, Hangzhou 311115, P.R.China}
\email{forrestwu@buaa.edu.cn}
\author{Jie Yang}
\address{Qiuzhen College, Tsinghua University, 100084, Beijing, P.R. China}
\email{yangjie23@mails.tsinghua.edu.cn}
\author{Wenyan Yang}
\address{School of Mathematical Sciences, University of Scinece and Technology of China, 230026 Hefei, P. R. China}
\email{ywy72@mail.ustc.edu.cn}
\begin{document}

\begin{abstract}
	We generalize Koyama's $7/10$ bound of the error term in the prime geodesic theorems to the principal congruence subgroups for quaternion algebras. Our method avoids the spectral side of the Jacquet--Langlands correspondences, and relates the counting function directly to those for the principal congruence subgroups of Eichler orders of level less than one.
\end{abstract}
	\maketitle
	\tableofcontents

\section{Introduction}

	Let $\Gamma$ be a lattice of $\PSL_2(\R)$. The Selberg zeta function is defined for $\Re(s) > 1$ by
	$$ \mathrm{Z}_{\Gamma}(s) = \prod_{[\gamma_0]} \prod_{k=0}^{\infty} \left( 1 - N([\gamma_0])^{-s-k} \right), $$
where $[\gamma_0]$ runs through the set of primitive hyperbolic conjugacy classes in $\Gamma$, and $N([\gamma_0]) = \alpha^2$ if $\gamma_0$ is conjugate to $\pm \begin{pmatrix} \alpha & \\ & \alpha^{-1} \end{pmatrix}$ for some $\alpha > 1$. It is regarded as an analogue of the Riemann zeta function $\zeta(s)$. The conjugacy classes $[\gamma_0]$ are regarded as an analogue of prime numbers. In particular, their distribution satisfies the same asymptotic as the primes
\begin{equation} \label{eq: PGT}
	\pi_{\Gamma}(x) := \extnorm{ \left\{ [\gamma_0] \ \middle| \ N([\gamma_0]) \leq x \right\} } = \mathrm{Li}(x) + E_{\Gamma}(x).
\end{equation}
	for some error term $E_{\Gamma}(x)$ smaller than $\mathrm{Li}(x)$ as $x \to \infty$. An estimation like \eqref{eq: PGT} is called a prime geodesic theorem for $\Gamma$. It is believed that the true order of the error term is $E_{\Gamma}(x) \ll_{\epsilon} x^{\frac{1}{2}+\epsilon}$. However, the reason is different from the primes. Although $\mathrm{Z}_{\Gamma}(s)$ satisfies the Riemann hypothesis in certain cases, it is a meromorphic function of order $2$, hence has many more zeros than $\zeta(s)$. In fact the Riemann hypothesis for $\mathrm{Z}_{\Gamma}(s)$ implies only $E_{\Gamma}(x) \ll_{\epsilon} x^{\frac{3}{4}+\epsilon}$ (see \cite{He76_Survey} for example). Note that the counting function $\pi_{\Gamma}(x)$, like in the case of prime number theorem, is intimately related to another counting function of an analogue of the von-Mangoldt function
\begin{equation} \label{eq: PGTBis}
	\Psi_{\Gamma}(x) := \sum_{k \in \Z_{\geq 1}, [\gamma_0]: N([\gamma_0])^k \leq x} \frac{\log N([\gamma_0])}{1 - N([\gamma_0])^{-k}} = x + \widetilde{E}_{\Gamma}(x).
\end{equation}
	
	In the special case $\Gamma = \PSL_2(\Z)$, many better bounds for $E_{\Gamma}(x)$ are available, e.g. $E_{\Gamma}(x) \ll_{\epsilon} x^{\frac{35}{48}+\epsilon}$ due to Iwaniec \cite{Iw84}, $E_{\Gamma}(x) \ll_{\epsilon} x^{\frac{7}{10}+\epsilon}$ due to Luo--Rudnick--Sarnak \cite{LRS95}, $E_{\Gamma}(x) \ll_{\epsilon} x^{\frac{71}{102}+\epsilon}$ due to Cai \cite{Cai02}, and $E_{\Gamma}(x) \ll_{\epsilon} x^{\frac{25}{36}+\epsilon}$ due to Soundararajan--Young \cite{SY13}. Note that the Luo--Rudnick--Sarnak bound is valid for any congruence subgroup of $\PSL_2(\Z)$, and that the Soundararajan--Young bound is also valid for principal congruence subgroups of $\PSL_2(\Z)$ by a previous work of the second author \cite{CWZ21}. The generalization of these results to arbitrary co-compact lattices has various difficulties. In the case where $\Gamma = \Gamma_{\D}$ is the unit group of a maximal order of a division quaternion algebra $\D$ defined over $\Q$ and unramified at $\infty$, Koyama \cite{Koy98} extended Luo--Rudnick--Sarnak's bound, and keeps the current record in this case.

	The aim of this paper is to generalize Koyama's result to the principal congruence subgroups. Let $\D$ be a division quaternion algebra defined over $\Q$ and unramified at $\infty$, with reduced norm $\nu_{\D}$. Let $\D^{\times}$ be the group of invertible elements in $\D$, viewed as an algebraic group over $\Q$. Fix a maximal order $\vO_{\D}$ of $\D$, as well as a two-sided ideal $\idlN \subset \vO_{\D}$. Define the principal congruence subgroup of $\PSL_2(\R)$ by
	$$ \Gamma_{\D}^1(\idlN) := \left\{ x \ \middle| \ x \in \vO_{\D}^{\times}, \ \nu_{\D}(x) = 1, \ x - \id_{\D} \in \idlN \right\}, \quad \Gamma_{\D}(\idlN) := \{ \pm \id_{\D} \} \cdot \Gamma_{\D}^1(\idlN). $$
\begin{theorem} \label{thm: Main}
	For any $\epsilon > 0$ we have
	$$ \pi_{\Gamma_{\D}(\idlN)}(x) = \mathrm{Li}(x) + O_{\epsilon}(x^{\frac{7}{10}+\epsilon}), \quad \Psi_{\Gamma_{\D}(\idlN)}(x) = x + O_{\epsilon}(x^{\frac{7}{10}+\epsilon}). $$
\end{theorem}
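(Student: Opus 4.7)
The plan is to reduce to the bound for $\Psi_{\Gamma_{\D}(\idlN)}(x)$, from which the asymptotic for $\pi_{\Gamma_{\D}(\idlN)}(x)$ follows by standard partial summation. Applying the Selberg trace formula on the compact quotient $\Gamma_{\D}(\idlN)\backslash \mathbb{H}$ with a suitable smoothing of the indicator of $[0,x]$ and truncation at spectral height $T$, one expects the explicit formula
$$ \widetilde{E}_{\Gamma_{\D}(\idlN)}(x) \ = \ -\sum_{\frac{1}{2} < s_j \leq 1} \frac{x^{s_j}}{s_j} \ + \sum_{|t_j| \leq T} \frac{x^{\frac{1}{2}+it_j}}{\frac{1}{2}+it_j} \ + \ O\!\left( \frac{x \log^2 x}{T} \right), $$
where $s_j$ and $\frac{1}{2}+it_j$ parametrize the exceptional and tempered Laplace spectrum of $\Gamma_{\D}(\idlN)\backslash \mathbb{H}$. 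The exceptional contribution should be handled by a Luo--Rudnick--Sarnak type density theorem combined with the Kim--Sarnak bound $\theta \leq 7/64$, already absorbed in $O(x^{7/10})$ with margin. The main battle is thus against the oscillating sum over tempered $t_j$.

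Following Koyama's strategy in the maximal-order case, the oscillating sum is to be treated via a Kuznetsov-type trace formula followed by Cauchy--Schwarz against a mean-value bound for Rankin--Selberg $L$-functions of the Laplace eigenforms, producing a bound of order $x^{1/2} T^{3/2+\epsilon}$; balancing with the tail $xT^{-1}$ at $T = x^{1/5}$ then yields the exponent $7/10$. The new ingredient, as advertised in the abstract, is to sidestep the Jacquet--Langlands transfer at the spectral level, which on the principal congruence cover would land in $\GL_2$-newforms of high non-squarefree level where the required large-sieve estimate degrades. Instead, one exploits that $\Gamma_{\D}(\idlN)$ is normal in $\vO_{\D}^{\times}/\{\pm \id_{\D}\}$ with finite quotient embedded in $(\vO_{\D}/\idlN)^{\times}$: by character orthogonality on this quotient, the hyperbolic count $\Psi_{\Gamma_{\D}(\idlN)}(x)$ decomposes into character-twisted counts attached to unit groups of Eichler orders whose level divides $\nu_{\D}(\idlN)$, a regime in which the Jacquet--Langlands transfer targets newforms of much milder level and Koyama's method applies cleanly.

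The principal obstacle is twofold. First, one must establish the combinatorial--geometric identity relating $\Psi_{\Gamma_{\D}(\idlN)}(x)$ to counting functions attached to the Eichler orders, with explicit control over the weights, so that the finite-index factor $[\vO_{\D}^{\times} : \Gamma_{\D}(\idlN)]$, polynomial in $\nu_{\D}(\idlN)$, does not dilute the target $x^{7/10+\epsilon}$. Second, the underlying mean-value estimate for Rankin--Selberg $L$-functions must be uniform across all the auxiliary Eichler-order levels involved; a suitably amplified large sieve on the $\GL_2$ side of each correspondence, together with an eventual repackaging of the twisted pieces, is where the bulk of the technical work should lie.
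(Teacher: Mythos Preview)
Your proposal takes a fundamentally different route from the paper, and in its current form has a genuine gap. The paper never touches the spectral side: there is no explicit formula, no Kuznetsov formula, no Rankin--Selberg mean value, and no density theorem. The argument is entirely on the geometric (conjugacy-class) side. The key new input is an explicit local matching of orbital integrals (Proposition~\ref{prop: ArithMatch}): at each ramified prime $p$, the normalized indicator $\varphi_{n_p}$ of the principal congruence subgroup $U_{\D}^{n_p}\subset\D^{\times}(\Q_p)$ is matched, in the sense of Definition~\ref{def: OrbIntM}, by an explicit linear combination $a_p f_{n_p^1} + b_p g_{n_p^2}$ of indicators of principal congruence subgroups of the two chain orders $\oM=\Mat_2(\Z_p)$ and $\oJ$ in $\Mat_2(\Q_p)$, with $a_p+b_p=1$. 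Globalizing through the adelic stable-orbital-integral identity of Proposition~\ref{prop: StAdeleOrbInt} yields the exact relation
\[
\Psi_{\Gamma_{\D}(\idlN)}(x)\;=\;\sum_{I\subset\Ram(\D)} \Bigl(\prod_{p\in I}a_p\prod_{p\in\Ram(\D)\setminus I}b_p\Bigr)\,\Psi_{\Gamma(\idlN_I)}(x),
\]
where each $\Gamma(\idlN_I)$ is a principal congruence subgroup for an Eichler order in $\Mat_2(\Q)$, hence an honest congruence subgroup of $\PSL_2(\Z)$. The Luo--Rudnick--Sarnak bound for congruence subgroups of $\PSL_2(\Z)$ is then invoked as a black box on each summand; since the coefficients sum to $\prod_p(a_p+b_p)=1$, the main terms assemble to $x$ and the error terms to $O_{\epsilon}(x^{7/10+\epsilon})$.

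The gap in your plan is the step ``by character orthogonality on $(\vO_{\D}/\idlN)^{\times}$ the count decomposes into character-twisted counts attached to unit groups of Eichler orders whose level divides $\nu_{\D}(\idlN)$''. Character orthogonality on that (noncommutative) quotient decomposes the spectrum of $\Gamma_{\D}(\idlN)\backslash\ag{H}$ into isotypic pieces indexed by irreducible representations of a finite group, i.e.\ into automorphic representations of $\D^{\times}$ with prescribed local types at the ramified primes; it does not produce counting functions for Eichler orders, and after Jacquet--Langlands those local types transfer to $\GL_2$-supercuspidals of the \emph{same} conductor, so no drop in level occurs. You would therefore still need the Koyama/Luo--Rudnick--Sarnak input at high non-squarefree level, precisely the obstacle you yourself flag. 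The paper sidesteps all of this by matching the hyperbolic counting functions directly rather than the spectra; the Eichler orders that appear live on the $\Mat_2$ side, not inside $\D$, and arise from the chain-order test functions in the local matching, not from any representation-theoretic decomposition on the quaternion side.
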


	Koyama's method exploits the Jacquet--Langlands correspondences, showing that the Laplacian eigenvalues on $\Gamma_{\D} \backslash \ag{H}$ are the same as those of new forms on $\Gamma_0(N) \backslash \ag{H}$ for some square-free integer $N$ determined by $\D$. This is an exploitation of the Jacquet--Langlands correspondences on the spectral sides. Its direct generalization to the (principal) congruence subgroups looks tricky. Our method exploits a basic tool in the trace formula proof of the Jacquet--Langlands correspondences \cite{GJ79}, namely the matching of the orbital integrals. Concretely, we have two ingredients in our method. The first is a relation between the counting function $\Psi_{\Gamma}$ and the adelic stable orbital integrals for a class of lattices $\Gamma$ admitting a large normalizer $\widetilde{\Gamma}$. This is established in \S 3. The second is the matching of the adelic stable orbital integrals for a quaternion division algebra and the $2 \times 2$ matrices. This matching is reduced to the relevant local ones, and is a \emph{variant} of the local matching for $\GL_2$ and its inner form given in \cite{GJ79}. The major difficulty is to find the test functions on the $2 \times 2$ matrices side. Our key observation is the following. Since the image of the local Jacquet--Langlands correspondences consists essentially only of supercuspidal representations, while the supercuspidal representations are constructed from chain orders, good candidates on the $2 \times 2$ matrices side should be given by the principal congruence subgroups associated with these chain orders. We indeed find them in Proposition \ref{prop: ArithMatch}.
	
	More interesting than the error term bound is the equation \eqref{eq: PsiRel}, which expresses the counting function $\Psi_{\Gamma_{\D}(\idlN)}(x)$ in terms of the counting functions $\Psi_{\Gamma}(x)$ for the relevant congruences subgroups $\Gamma$ on the $2 \times 2$ matrices side. Further improvement of the error term bound depends only on those for such $\Gamma < \PSL_2(\Z)$. In the case of Soundararajan--Young bound, such an improvement has non-trivial difficulties. We refer the reader to the computation for the Hecke congruence subgroups in \cite{CWZ21}. 
	
	A generalization of our method to co-compact quaternion lattices of $\PSL_2(\C)$ should be easy. It is also interesting to extend our result to non-principal congruence subgroups, which would require a new way of adelization without the crucial assumption \eqref{eq: LocGlobLatGps} below. We formalize such possibility as a conjecture:
\begin{conjecture}
	For any congruence subgroup $\Gamma$ of $\PSL_2(\R)$ defined by $\D$, there exist finitely many congruence subgroups $\Gamma_i < \PSL_2(\Z)$ and $a_i \in \Q^{\times}$ so that
	$$ \Psi_{\Gamma}(x) = \sum_i a_i \cdot \Psi_{\Gamma_i}(x), \quad \sum_i a_i = 1. $$
\end{conjecture}

\section*{Acknowledgement}

	The authors thank the referees for suggestions improving on the readability of the paper. They would like to thank the organizers of the 2025 summer school ``Algebra and Number Theory'' held at the Chinese Academy of Sciences, during which event the paper is written. Wu would like to thank Jiandi Zou for inspiring discussion related to the chain orders. Part of the work is done during Wu's visit at the IASM of Zhejiang University. Wu would also like to thank Prof. Binyong Sun and IASM for hospitality and the support from the New Cornerstone Science Foundation.

\section{A Matching of non-Archimedean Orbital Integrals}

	\subsection{Matching of Orbital Integrals}
	
	Let $\F$ be a $p$-adic field with normalized additive valuation $v$, valuation ring $\vo$, valuation ideal $\vp$ and a chosen uniformizer $\varpi$. Let $q := \norm[\vo/\vp]$. Up to isomorphism, there are two quaternion algebras over $\F$: $\Mat_2(\F)$ and the unique division quaternion algebra $\D(\F)$. Their reduced traces and reduced norms are denoted by $\Tr$, resp. $\Tr_{\D}$ and $\det$, resp. $\nu_{\D}$. Fix a uniformizer $\varpi_{\D}$ of $\D(\F)$ with $\nu_{\D}(\varpi_{\D}) = \varpi$. Their groups of invertible elements are $\Mat_2(\F)^{\times} = \GL_2(\F)$ and $\D(\F)^{\times} = \D^{\times}(\F)$, respectively. We are particularly interested in their quotients by the center, $\PGL_2(\F)$ and $\PD(\F) := \D^{\times}(\F)/\F^{\times}$, respectively.
	
	Let $\mathcal{A}_{\D}(\F)$ and $\mathcal{A}^{\lozenge}(\F)$ be the set of irreducible unitary representations of $\PD(\F)$ and square-integrable representations of $\PGL_2(\F)$, respectively. The following is (a special case of) the (local non-archimedean) \emph{Jacquet--Langlands correspondences} for $\PGL_2$.
	
\begin{theorem} \label{thm: JLCh}
	There is a unique bijective map
	$$ \JL: \mathcal{A}_{\D}(\F) \to \mathcal{A}^{\lozenge}(\F), \quad \pi \mapsto \JL(\pi) $$
	characterized by the following condition:
\begin{itemize}
	\item[(Ch)] Let $\E/\F$ be a quadratic field extension. Let $\theta_*$ be the \emph{character function} of the representation $*$. Then for any $x \in \E^{\times}$ and any $\F$-algebra embeddings $\iota: \E \to \Mat_2(\F)$ and $\iota': \E \to \D(\F)$ we have
	$$ \theta_{\JL(\pi)}(\iota(x)) = - \theta_{\pi}(\iota'(x)), \quad \forall x \in \E^{\times}. $$
\end{itemize}
\end{theorem}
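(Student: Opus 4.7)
The plan is to separate the statement into uniqueness and existence. Uniqueness is essentially formal: suppose two maps $\JL_1, \JL_2$ both satisfy condition (Ch). Given $\pi \in \mathcal{A}_{\D}(\F)$, set $\sigma_i := \JL_i(\pi) \in \mathcal{A}^{\lozenge}(\F)$. Every regular elliptic element of $\PGL_2(\F)$ lies in $\iota(\E^\times)$ for some quadratic $\E/\F$, so (Ch) forces $\theta_{\sigma_1}$ and $\theta_{\sigma_2}$ to coincide on the whole regular elliptic set. Harish--Chandra's orthogonality relations for characters of square-integrable representations assert that two inequivalent elements of $\mathcal{A}^{\lozenge}(\F)$ have linearly independent characters on the regular elliptic set; thus $\sigma_1 \cong \sigma_2$, and bijectivity then forces $\JL_1 = \JL_2$.

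For existence I would follow the original approach of Jacquet--Langlands via transfer of orbital integrals and a global trace-formula comparison. First, I would define a matching between test functions $f \in C_c^\infty(\PGL_2(\F))$ and $f^{\D} \in C_c^\infty(\PD(\F))$ by requiring the regular elliptic orbital integrals to agree up to the appropriate sign (encoded by the relative Weyl discriminants), and $f$ to have vanishing orbital integrals on the regular split set (which has no counterpart on the division side). The analytic heart is the existence of matching functions in both directions. I would then globalize: embed $\F$ as a completion of a number field $k$, choose a global quaternion algebra $\mathcal{D}_k$ over $k$ whose set of ramified places contains the chosen one and is otherwise freely controllable, and compare the Arthur--Selberg trace formulas of $\PGL_2$ and of the projective unit group $\mathcal{D}_k^{\times}/k^{\times}$ over $k$. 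The geometric sides match by the orbital-integral transfer, hence the spectral sides do as well; varying the global test function and invoking linear independence of characters place by place, one isolates individual representations and extracts the local character identity (Ch).

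A more constructive alternative, closer in spirit to the chain-order philosophy emphasized in the paper's introduction, would parameterize both sides explicitly: irreducible representations of $\PD(\F)$ arise from characters of anisotropic tori via inflation from the maximal compact-mod-centre subgroup, while square-integrable representations of $\PGL_2(\F)$ are either Steinberg twists or compactly induced from normalizers of chain orders in $\Mat_2(\F)$. One would match parameters directly and then insert the Sally--Shalika character formulas to verify (Ch) by hand.

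The main obstacle, in either route, will be producing the character identity together with the correct sign. In the trace-formula route this is absorbed into the orbital-integral transfer and the density arguments required to separate a single representation from the spectral sum; in the constructive route it becomes a case analysis on ramified versus unramified tori plus a separate treatment of the Steinberg representation and its unramified quadratic twists, whose JL preimages are the one-dimensional representations of $\PD(\F)$ and so do not fit the generic supercuspidal pattern.
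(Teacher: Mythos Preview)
Your sketch is correct and in fact considerably more detailed than the paper's own treatment: the paper does not prove this theorem but simply cites \cite[Theorem (8.1)]{GJ79}. The route you outline for existence---transfer of orbital integrals plus a global trace-formula comparison, followed by a density/separation argument to extract the local character identity with sign---is precisely the strategy of that reference, and your uniqueness argument via Harish--Chandra's orthogonality relations on the regular elliptic set is the standard one. So you are not diverging from the paper; you are supplying the content behind its citation. The alternative constructive approach you mention (explicit parametrization by chain orders and Sally--Shalika type character formulas) is also valid and is developed in detail in sources such as Bushnell--Henniart, but it is not what the paper invokes here.
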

\begin{proof}
	This is \cite[Theorem (8.1)]{GJ79}, up to notational difference.
\end{proof}

\noindent In order to capture the above correspondences in the trace formulae, matching orbital integrals are introduced in \cite[\S 8.B]{GJ79}. We reformulate a \emph{variant} as follows.

\begin{definition} \label{def: OrbIntM}
	Let $f \in \Cont_c^{\infty}(\GL_2(\F))$ and $\varphi \in \Cont_c^{\infty}(\D^{\times}(\F))$. 

\noindent (1) For any quadratic extension of algebras $\E/\F$ with an embedding $\iota: \E \to \Mat_2(\F)$, resp. $\iota': \E \to \D(\F)$, we introduce the orbital integral for any \emph{regular element} $x$, i.e., elements $x \in \E^{\times}-\F$, by
	$$ \vO(f; \E,x) := \int_{\iota(\E^{\times}) \backslash \GL_2(\F)} f(g^{-1} \iota(x) g) \ud g \quad \text{resp.} \quad \vO(\varphi; \E,x) := \int_{\iota'(\E^{\times}) \backslash \D^{\times}(\F)} \varphi(g^{-1} \iota'(x) g) \ud g, $$
	where we transport a common Haar measure of $\E^{\times}$ to both $\iota(\E^{\times})$ and $\iota'(\E^{\times})$.
	
\noindent (2) The functions $f$ and $\varphi$ are said to be \emph{matching}, denoted by $f \leftrightarrow \varphi$, if:
\begin{itemize}
	\item[(\rmnum{1})] For $\E \simeq \F \oplus \F$ and regular $x$ we have $\vO(f; \E,x) = 0$;
	\item[(\rmnum{2})] For $\E$ field and regular $x$ we have $\vO(f; \E,x) = \vO(\varphi; \E,x)$.
\end{itemize}
\end{definition}

\noindent Note that for matching functions $f$ and $\varphi$ one has (see \cite[(8.5), (8.6) \& (8.9)]{GJ79})
\begin{align} 
	\Tr \left( \JL(\pi)(f) \right) &= - \Tr \left( \pi(\varphi) \right), \ \forall \pi \in \mathcal{A}_{\D}(\F); \\
	\int_{\GL_2(\F)} f(g) \chi(\det g) \ud g &= \int_{\D^{\times}(\F)} \varphi(g) \chi(\nu_{\D}(g)) \ud g, \ \forall \chi \in \widehat{\F^{\times}}; \\
	\Tr(\pi(f)) &= 0
\end{align}
	for irreducible admissible $\pi$ of $\PGL_2(\F)$ that is neither square-integrable nor finite dimensional. 
\begin{remark}
	If $f$ and $\varphi$ have a common central character, then for a given $\varphi$ the existence of a/many matching $f$ is ensured by \cite[Lemma (8.10)]{GJ79}. For our version without central character, such existence is not known. The existence for our version obviously implies the one with central character.
\end{remark}

	\subsection{An Arithmetic Matching}
	
	Chain orders (see \cite[\S 12]{BuH06}) play an important role in the classification of the supercuspidal representations of $\GL_2(\F)$. They also turn out to be important for our purpose.
	
	Precisely, in $\Mat_2(\F)$ we have two chain $\vo$-orders $\oM := \Mat_2(\vo)$ and $\oJ := \begin{pmatrix} \vo & \vo \\ \vp & \vo \end{pmatrix}$. Let $\VP_{\oM} := \varpi \Mat_2(\vo)$, resp. $\VP_{\oJ} = \begin{pmatrix} \vp & \vo \\ \vp & \vp \end{pmatrix}$ be the Jacobson radical of $\oM$, resp. $\oJ$. Let $\gp{K}_{\oM}$, resp. $\gp{K}_{\oJ}$ be the normalizer subgroup of $\oM$, resp. $\oJ$ in $\GL_2(\F)$. Concretely we have 
	$$ \VP_{\oM} = \varpi \oM, \quad \VP_{\oJ} = \Pi \oJ; \qquad \gp{K}_{\oM} = \varpi^{\Z} \oM^{\times}, \quad \gp{K}_{\oJ} = \Pi^{\Z} \oJ^{\times}; \qquad \Pi := \begin{pmatrix} & 1 \\ \varpi & \end{pmatrix}. $$
	Note that up to conjugation $\gp{K}_{\oM}$ and $\gp{K}_{\oJ}$ are all the maximal compact subgroups of $\PGL_2(\F)$. Note also that, since $\Pi^2 = \varpi \id_2$, $\vo^{\times} \id_2 < \oJ^{\times}$ and $\Pi$ normalizes $\oJ$ hence also $\oJ^{\times}$ we have
	$$ \gp{K}_{\oJ} = \Pi^{2\Z} \mathfrak{J}^{\times} \bigsqcup \Pi^{2\Z+1} \mathfrak{J}^{\times} = \varpi^{\Z} \mathfrak{J}^{\times} \bigsqcup \varpi^{\Z} \mathfrak{J}^{\times} \Pi. $$
	
	 In $\D(\F)$ we have the unique maximal $\vo$-order $\vO_{\D} := \left\{ x \in \D(\F) \ \middle| \ \nu_{\D}(x) \in \vo \right\}$. Then $\F^{\times}\vO_{\D}^{\times}$ is the unique maximal compact subgroup of $\PD(\F)$. The Jacobson radical of $\vO_{\D}$ is $\VP_{\vO_{\D}} = \vP_{\D} := \varpi_{\D} \vO_{\D}$. The normalizer subgroup of $\vO_{\D}$ is $\D^{\times}(\F)$.
	 
\begin{definition} \label{def: PrinpChF}
	(1) For $n \in \Z_{\geq 0}$ write $U_{\vo}^n := (1+\vp^n) \cap \vo^{\times}$.

\noindent (2) Let $\vO \in \{ \oM, \oJ, \vO_{\D} \}$ with Jacobson radical $\VP$. The \emph{principal congruence subgroups} for $\vO^{\times}$ are indexed by $n \in \Z_{\geq 0}$ as $U_{\vO}^n := (1 + \VP^n) \cap \vO^{\times}$. We also write $U_{\D}^n = U_{\vO_{\D}}^n$ for simplicity.
	
\noindent (3) We introduce the following normalized characteristic functions
	$$ f_n := \tfrac{1}{[\vo^{\times}:\det U_{\oM}^n]} \cdot \tfrac{1}{\Vol(U_{\oM}^n)} \id_{U_{\oM}^n}, \quad g_n := \tfrac{1}{[\vo^{\times}:\det U_{\oJ}^n]} \cdot \tfrac{1}{\Vol(U_{\oJ}^n)} \id_{U_{\oJ}^n}, \quad \varphi_n := \tfrac{1}{[\vo^{\times}:\nu_{\D} \left( U_{\D}^n \right)]} \cdot \tfrac{1}{\Vol(U_{\D}^n)} \id_{U_{\D}^n}. $$
\end{definition}

\begin{proposition} \label{prop: ArithMatch}
	We have the matching of test functions in the sense of Definition \ref{def: OrbIntM}
	$$ \begin{cases} 
	\widetilde{f}_0 := 2f_0 - g_0 \leftrightarrow \varphi_0 & \\ 
	\widetilde{f}_{2n} := \tfrac{2q}{q-1} f_{n} - \tfrac{q+1}{q-1} g_{2n} \leftrightarrow \varphi_{2n} & \forall \ n \in \Z_{\geq 1} \\
	\widetilde{f}_{2n-1} := - \tfrac{2}{q-1} f_{n} + \tfrac{q+1}{q-1} g_{2n-1} \leftrightarrow \varphi_{2n-1} & \forall \ n \in \Z_{\geq 1}
	\end{cases}. $$
\end{proposition}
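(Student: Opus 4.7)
The plan is to compute the orbital integrals on both sides explicitly and verify the three identities by direct comparison. The key structural fact on the $\D^\times(\F)$-side is that $\vP_\D^n$ is a two-sided ideal of $\vO_\D$, so $U_\D^n = 1 + \vP_\D^n$ is \emph{normal} in $\D^\times(\F)$. Hence $\varphi_n(g^{-1} \iota'(x) g)$ is constant in $g$, and since $\PD(\F)$ is compact the quotient $\iota'(\E^\times) \backslash \D^\times(\F)$ has finite volume, giving
\[
\vO(\varphi_n; \E, x) = c_n^\D \cdot \Vol\bigl(\iota'(\E^\times) \backslash \D^\times(\F)\bigr) \cdot \id_{U_\D^n}(\iota'(x)),
\]
where $c_n^\D$ is the prefactor from Definition \ref{def: PrinpChF}. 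Intersecting the filtration $\vP_\D^n$ with a chosen embedding $\iota'(\vo_\E) \subset \vO_\D$ turns the last indicator into $\id_{1 + \vp_\E^{\lceil n/e \rceil}}(x)$, where $e = e_{\E/\F} \in \{1,2\}$ is the ramification index.

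On the $\GL_2(\F)$-side, the orbital integrals of $f_n$ and $g_n$ are interpreted via the Bruhat--Tits tree $\mathfrak{T}$ of $\PGL_2(\F)$: vertices $v$ correspond to $\F^\times \oM^\times$-cosets and carry a conjugate $U_v^n$ of $U_{\oM}^n$, while edges $e$ correspond to $\F^\times \oJ^\times$-cosets and carry a conjugate $U_e^n$ of $U_{\oJ}^n$. Then $\vO(f_n; \E, x)$ is a sum over $T_\E$-orbits of vertices $v$ with $\iota(x) \in U_v^n$, weighted by appropriate volume factors; similarly for $g_n$ and edges. The decisive geometric input is the action of $T_\E = \iota(\E^\times)$ on $\mathfrak{T}$: a unique fixed vertex for $\E$ unramified, a unique fixed edge-midpoint for $\E$ ramified, and an infinite fixed apartment for the split case $\E = \F \oplus \F$. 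For $\E$ a field, the set $\{ v : \iota(x) \in U_v^n \}$ is a ball centered at the fixed point whose radius is determined by the depth $d$ with $x \in 1 + \vp_\E^d$, and its cardinality is a geometric series in $q$.

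For a field $\E$, condition (ii) is verified by expanding both sides as geometric series. The coefficients $\tfrac{2q}{q-1}$, $\tfrac{q+1}{q-1}$, $-\tfrac{2}{q-1}$ in $\widetilde{f}_{2n}$ and $\widetilde{f}_{2n-1}$ are forced as the unique choice that balances the $\oM$-vertex sum and the $\oJ$-edge sum so that the identity holds simultaneously for $\E$ unramified and for $\E$ ramified at the given parity of $n$; the two ramification types give two independent linear constraints on the two unknowns. The base case $\widetilde{f}_0 = 2 f_0 - g_0 \leftrightarrow \varphi_0$ is a simpler level-zero comparison involving only $\Vol(\oM^\times)$, $\Vol(\oJ^\times)$ and $\Vol(\vO_\D^\times)$. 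The consistent appearance of factors of the shape $(q-1) q^{\ast}$ in the indices $[\vo^\times : \det U_{\oM}^n]$, $[\vo^\times : \det U_{\oJ}^n]$, $[\vo^\times : \nu_\D(U_\D^n)]$ gives the common denominator that makes the clean matching algebraically possible.

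The split case (i) is handled by the same tree framework but differs in that $T_\E$ fixes an infinite apartment rather than a point. Contributions from $\oM$-vertices and $\oJ$-edges along this apartment form infinite geometric series, and the combinations $\widetilde{f}_n$ are arranged so that the two series telescope and cancel along the axis. The main obstacle is the combinatorial bookkeeping: locating the fixed point of $T_\E$, describing $T_\E$-orbits as concentric spheres of vertices/edges, and assembling all Haar-measure and index factors consistently across the three cases (split, unramified field, ramified field). Once this is organized, the stated matching collapses to elementary algebraic identities in $q$.
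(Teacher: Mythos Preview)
Your plan is essentially the paper's proof recast in Bruhat--Tits tree language: the paper parametrizes the double cosets $\iota(\E^\times)\backslash \GL_2(\F)/\gp{K}_{\vO}$ via optimal embeddings into the quadratic orders $L_r=\vo+\varpi^r\vO_{\E}$ (for $\E$ a field) and via the upper-triangular representatives $\left(\begin{smallmatrix}1&\varpi^{-r}\\&1\end{smallmatrix}\right)$ (for $\E$ split), which is precisely your stratification by distance $r$ from the fixed point or fixed apartment of $T_{\E}$ on $\mathfrak{T}$. The subsequent volume and index computations are then the same on both accounts, and the final verification is the same algebra in $q$.

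One correction is needed on the $\D^{\times}$ side: your intersection formula has the exponent inverted. Since $e(\D/\F)=2$, the restriction of the $\D$-valuation to an embedded $\E$ with $e(\E/\F)=e$ equals $(2/e)\,v_{\E}$, so
\[
\vP_{\D}^{\,n}\cap\iota'(\E)=\iota'\bigl(\vP_{\E}^{\lceil en/2\rceil}\bigr),
\]
not $\vP_{\E}^{\lceil n/e\rceil}$. Thus the unramified quadratic field gives depth $\lceil n/2\rceil$ and the ramified one gives depth $n$, the opposite of what you wrote. With this fix the remainder of your outline goes through and agrees with the paper's explicit formulas.
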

\begin{remark}
	It is easy to compute
	$$ \det \left( U_{\oM}^n \right) = U_{\vo}^n, \quad \det \left( U_{\oJ}^n \right) = U_{\vo}^{\lceil \frac{n}{2} \rceil}, \quad \nu_{\D} \left( U_{\D}^n \right) = U_{\vo}^{\lceil \frac{n}{2} \rceil}. $$
	So the factors containing their index in $\vo^{\times}$ are equal on both sides of the matching functions. Hence we can ignore them in the proof of Proposition \ref{prop: ArithMatch}. See Remark \ref{rmk: B1NotB} for more explication.
\end{remark}

	We are going to prove Proposition \ref{prop: ArithMatch} by explicit computation of the relevant orbital integrals. We start with split $\E$.

\begin{lemma} \label{lem: OrbIntSp}
	Consider $\E = \F \oplus \F$ and regular elements $x = (a,b) \in \F^{\times} \oplus \F^{\times}$ with $a \neq b$. Choose $\iota: \E \hookrightarrow \Mat_2(\F)$ to be the diagonal embedding.
	
\noindent (1) We have the decompositions
	$$ \GL_2(\F) = \bigsqcup_{r=0}^{\infty} \iota(\E^{\times}) \begin{pmatrix} 1 & \varpi^{-r} \\ & 1 \end{pmatrix} \gp{K}_{\oM}, \quad \GL_2(\F) = \bigsqcup_{r=0}^{\infty} \iota(\E^{\times}) \begin{pmatrix} 1 & \varpi^{-r} \\ & 1 \end{pmatrix} \gp{K}_{\oJ}. $$
	Moreover, for $\vO \in \{ \oM, \oJ \}$ we have the volume computation for $r \geq 1$
	$$ \frac{\Vol \left( \iota(\E^{\times}) \backslash \iota(\E^{\times}) \begin{pmatrix} 1 & \varpi^{-r} \\ & 1 \end{pmatrix} \gp{K}_{\vO} \right)}{\Vol(\vO^{\times})} = \frac{q^{r-1}(q-1)}{\Vol(\vo^{\times} \times \vo^{\times})} \cdot 
	\begin{cases}
		1 & \text{if } \vO = \oM \\
		2 & \text{if } \vO = \oJ
	\end{cases}. $$

\noindent (2) We have the formulae
	$$ \vO(f_n; \E, (a,b)) = \frac{1}{\Vol(\vo^{\times} \times \vo^{\times})} \cdot \frac{\id_{U_{\vo}^n}(a)\id_{U_{\vo}^n}(b)}{\norm[a-b]_{\F}} \cdot
	\begin{cases}
		1 & \text{if } n = 0 \\
		q^{3n-3}(q-1)^2(q+1) & \text{if } n \geq 1
	\end{cases}.
	$$
	
\noindent (3) We have the formulae
	$$ \vO(g_n; \E, (a,b)) = \frac{2}{\Vol(\vo^{\times} \times \vo^{\times})} \cdot \frac{\id_{U_{\vo}^{\lceil \frac{n}{2} \rceil}}(a) \id_{U_{\vo}^{\lceil \frac{n}{2} \rceil}}(b)}{\norm[a-b]_{\F}} \cdot 
	\begin{cases}
		1 & \text{if } n = 0 \\
		q^{n+\lfloor \frac{n}{2} \rfloor - 2}(q-1)^2 & \text{if } n \geq 1
	\end{cases}.
	$$
\end{lemma}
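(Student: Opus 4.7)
The plan is to establish the three parts of the lemma in order, with essentially all the subtlety concentrated in the case $\vO = \oJ$.

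For part (1), I would first reduce both decompositions to an Iwasawa-type statement $\GL_2(\F) = B \cdot \gp{K}_\vO$, where $B$ denotes the upper-triangular Borel. For $\vO = \oM$ this is the classical Iwasawa decomposition together with $\oM^\times \subset \gp{K}_\oM$. For $\vO = \oJ$ one uses the identity $w = \diag(1, \varpi^{-1}) \cdot \Pi$, showing $w \in B \cdot \gp{K}_\oJ$; combined with the Bruhat decomposition $\GL_2(\F) = B \sqcup BwB$, this yields $\GL_2(\F) = B \gp{K}_\oJ$. Writing any $g$ as $t \, n_x \, k$ via Iwasawa with $n_x = \begin{pmatrix} 1 & x \\ 0 & 1 \end{pmatrix}$, a direct matrix computation shows $T n_x \gp{K}_\vO = T n_{x'} \gp{K}_\vO$ if and only if $x' \in \vo^\times x + \vo$, whose orbits on $\F$ are exactly $\vo$ (for $r = 0$) and $\varpi^{-r} \vo^\times$ (for $r \geq 1$), giving the stated parametrization by $r \geq 0$.

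For the volume formula I would use the orbit--stabilizer identification $T \backslash T n_r \gp{K}_\vO \cong \gp{K}_\vO / (n_r^{-1} T n_r \cap \gp{K}_\vO)$; although $\gp{K}_\vO$ is non-compact, the stabilizer contains the center $\varpi^\Z$, so the quotient is compact modulo the center of $\GL_2(\F)$ and has finite measure. A direct matrix computation gives $n_r^{-1} T n_r \cap \vO^\times = \{(t_1, t_2) \in \vo^\times \times \vo^\times : t_1 \equiv t_2 \pmod{\vp^r}\}$ for both $\vO \in \{\oM, \oJ\}$, of volume $\Vol(\vo^\times) \cdot \Vol(U_\vo^r) = \Vol(\vo^\times \times \vo^\times) / (q^{r-1}(q-1))$ for $r \geq 1$. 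The extra factor of $2$ for $\vO = \oJ$ comes from the decomposition $\gp{K}_\oJ = \oJ^\times \sqcup \Pi \oJ^\times$ modulo the center (since $\Pi^2 = \varpi \cdot \id$); a direct check confirms that $T n_r \oJ^\times$ and $T n_r \Pi \oJ^\times$ are disjoint for every $r \geq 0$ by showing that no element of $n_r^{-1} T n_r \cdot \oJ^\times$ equals $\Pi$, on account of the determinant valuation constraint.

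For parts (2)--(3), one verifies that $U_\vO^n$ is normalized by $\gp{K}_\vO$: for $\oM$ this is immediate from normality in $\oM^\times$ and central invariance; for $\oJ$ one checks that $\Pi$-conjugation preserves the two-sided ideal $\VP_\oJ^n$ (using $\Pi \oJ = \oJ \Pi$ and $\Pi^2 = \varpi \cdot \id$). Hence the test functions $f_n$ and $g_n$ are $\gp{K}_\vO$-conjugation invariant, and by the Remark --- which permits ignoring the central-character index factors --- we obtain $\vO(f_n; \E, (a,b)) = \sum_r \Vol(T \backslash T n_r \gp{K}_\oM) \cdot f_n(n_r^{-1} \iota(a,b) n_r)$ and analogously for $g_n$. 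A direct computation yields $n_r^{-1} \iota(a,b) n_r = \begin{pmatrix} a & (a-b)\varpi^{-r} \\ 0 & b \end{pmatrix}$, and matching entries against $\VP_\vO^n$ --- for $\oM$, $\VP_\oM^n = \varpi^n \Mat_2(\vo)$; for $\oJ$, $\VP_\oJ^n = \Pi^n \oJ$ with entry valuations determined by $\lfloor n/2 \rfloor$ and $\lceil n/2 \rceil$ --- gives the support condition: $a, b$ lie in the appropriate $U_\vo^{?}$ subgroup and $v(a-b) \geq r + $ const. Summing the geometric series $1 + \sum_{r=1}^R q^{r-1}(q-1) = q^R$ over the valid range collects all contributions into a single power of $q^{v(a-b)}$, producing the $|a-b|_\F^{-1}$ factor; multiplying by the index $[\vO^\times : U_\vO^n]$ (computed from the successive quotients $\VP_\vO^{n-1}/\VP_\vO^n$) recovers the claimed coefficient.

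The main obstacle is the bookkeeping specific to $\vO = \oJ$: one must carefully verify Iwasawa via the detour through $\Pi$, establish the factor-of-two in the volumes from the two-component structure of $\gp{K}_\oJ$ modulo center, and correctly extract the entry-wise form of $\VP_\oJ^n$ for both parities of $n$ in order to read off the correct exponents and congruence-subgroup levels. Each step is elementary but demands close attention to valuation bookkeeping.
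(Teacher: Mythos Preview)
Your proposal is correct and follows essentially the same route as the paper: Iwasawa-type decomposition, orbit--stabilizer for the volumes (with the factor $2$ for $\oJ$ coming from $\gp{K}_{\oJ}/\varpi^{\Z} \simeq \oJ^{\times} \sqcup \Pi\oJ^{\times}$), conjugation-invariance of $U_{\vO}^n$ under $\gp{K}_{\vO}$, the explicit conjugate $n_r^{-1}\iota(a,b)n_r$, and the telescoping geometric sum. The only cosmetic difference is that the paper deduces the $\oJ$-decomposition from the $\oM$-decomposition via the coset decomposition $\oM^{\times} = \bigsqcup_{u} w\begin{pmatrix}1&\\u&1\end{pmatrix}\oJ^{\times} \sqcup \oJ^{\times}$, whereas you obtain $\GL_2(\F)=B\,\gp{K}_{\oJ}$ directly from Bruhat and $w=\diag(1,\varpi^{-1})\Pi$; both are equally short.
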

\begin{proof}
	(1) The first equality follows readily from the usual Iwasawa decomposition. For the second one, we first show that the right hand side is a disjoint union. In fact, for $r_1,r_2 \in \Z_{\geq 0}$ the condition
	$$ \begin{pmatrix} a & a \varpi^{-r_2} - b \varpi^{-r_1} \\ & b \end{pmatrix} = \begin{pmatrix} 1 & - \varpi^{-r_1} \\ & 1 \end{pmatrix} \begin{pmatrix} a & \\ & b \end{pmatrix} \begin{pmatrix} 1 & \varpi^{-r_2} \\ & 1 \end{pmatrix} \in \gp{K}_{\oJ} = \F^{\times} \oJ^{\times} \sqcup \F^{\times} \oJ^{\times} \Pi $$
	is satisfied only if the left hand side lies in $\F^{\times} \oJ^{\times}$, since $\gp{B}(\F) \cap \oJ^{\times} \Pi = \emptyset$, where $\gp{B}(\F)$ is the subgroup of upper triangular matrices of $\GL_2(\F)$. Therefore for some $z \in \F^{\times}$
	$$ \begin{pmatrix} za & za \varpi^{-r_2} - zb \varpi^{-r_1} \\ & zb \end{pmatrix} \in \oJ^{\times} \ \Rightarrow \ za,zb \in \vo^{\times}, za \varpi^{-r_2} - zb \varpi^{-r_1} \in \vo. $$
	The last condition implies $r_1=r_2$ since otherwise $v(za \varpi^{-r_2} - zb \varpi^{-r_1}) = \min \left( v(za\varpi^{-r_2}), v(zb\varpi^{-r_1}) \right) = \min(-r_2,-r_1) < 0$. We then note the coset decomposition
	$$ \oM^{\times} = \bigsqcup_{u \in \vo/\vp} w \begin{pmatrix} 1 & \\ u & 1 \end{pmatrix} \oJ^{\times} \bigsqcup \oJ^{\times}, \quad w := \begin{pmatrix} & 1 \\ 1 & \end{pmatrix}. $$
	We get the second equality from the first one since
\begin{multline*} 
	\iota(\E^{\times}) \begin{pmatrix} 1 & \varpi^{-r} \\ & 1 \end{pmatrix} w \begin{pmatrix} 1 & \\ u & 1 \end{pmatrix} \oJ^{\times} = \iota(\E^{\times}) \begin{pmatrix} 1 & \varpi^{-r-1} \\ & 1 \end{pmatrix} \Pi \begin{pmatrix} 1 & \\ u & 1 \end{pmatrix} \oJ^{\times} \\
	= \iota(\E^{\times}) \begin{pmatrix} 1 & \varpi^{-r-1}+\varpi^{-1}u \\ & 1 \end{pmatrix} \Pi \oJ^{\times} \subset \iota(\E^{\times}) \begin{pmatrix} 1 & \varpi^{-k} \\ & 1 \end{pmatrix} \gp{K}_{\oJ}
\end{multline*}
	for some $k \in \Z_{\geq 0}$. The ``moreover'' part follows readily from
	$$ \iota(\E^{\times}) \cap \begin{pmatrix} 1 & \varpi^{-r} \\ & 1 \end{pmatrix} \gp{K}_{\vO} \begin{pmatrix} 1 & -\varpi^{-r} \\ & 1 \end{pmatrix} = \F^{\times} \left\{ \begin{pmatrix} a & \\ & b \end{pmatrix} \ \middle| \ a,b \in \vo^{\times}, a-b \in 1+\vp^r \right\}, $$
	which has index $q^{r-1}(q-1)$ in $\iota(\E^{\times}) \cap \gp{K}_{\vO}$, $\iota(\E^{\times}) \cap \vO^{\times} = \iota(\vo^{\times} \times \vo^{\times})$ and
	$$ \iota(\E^{\times}) \backslash \iota(\E^{\times}) \gp{K}_{\oM} = \iota(\E^{\times}) \backslash \iota(\E^{\times}) \oM^{\times} \simeq \iota(\vo^{\times} \times \vo^{\times}) \backslash \oM^{\times}, $$
	$$ \iota(\E^{\times}) \backslash \iota(\E^{\times}) \gp{K}_{\oJ} = \left( \iota(\E^{\times}) \backslash \iota(\E^{\times}) \oJ^{\times} \right) \bigsqcup \left( \iota(\E^{\times}) \backslash \iota(\E^{\times}) \oJ^{\times} \Pi \right) \simeq \left( \iota(\vo^{\times} \times \vo^{\times}) \backslash \oM^{\times} \right) \times \Z/2\Z. $$
	
\noindent (2) Since $U_{\oM}^n \triangleleft \gp{K}_{\oM}$, the function $f_n$ is invariant by conjugation by $\gp{K}_{\oM}$. We get by (1)
\begin{multline*}
	\vO(f_n; \E, (a,1)) \\
	= \frac{\Vol \left( \iota(\E^{\times}) \backslash \iota(\E^{\times}) \gp{K}_{\oM} \right)}{\Vol(U_{\oM}^n)} \cdot \left\{ \id_{U_{\oM}^n} \left( \begin{pmatrix} a & \\ & 1 \end{pmatrix} \right) + \sum_{r=1}^{\infty} q^{r-1}(q-1) \cdot \id_{U_{\oM}^n} \left( \begin{pmatrix} a & \varpi^{-r}(a-1) \\ & 1 \end{pmatrix} \right) \right\} \\
	= \frac{\Vol \left( \iota(\E^{\times}) \backslash \iota(\E^{\times}) \gp{K}_{\oM} \right)}{\Vol(U_{\oM}^n)} \cdot \left\{ \id_{U_{\vo}^n} \left( a \right) + \sum_{r=1}^{\infty} q^{r-1}(q-1) \cdot \id_{U_{\vo}^{n+r}} \left( a \right) \right\} \\
	= \frac{\Vol \left( \iota(\E^{\times}) \backslash \iota(\E^{\times}) \gp{K}_{\oM} \right)}{\Vol(U_{\oM}^n)} \cdot \frac{\id_{U_{\vo}^n}(a)}{q^n \cdot \norm[a-1]_{\F}}.
\end{multline*}
	The stated formula follows readily from the indices computation
\begin{equation} \label{eq: MUnitsInd}
	[\oM^{\times} : U_{\oM}^1] = \extnorm{\GL_2(\mathbb{F}_q)} = (q^2-q)(q^2-1), \quad [U_{\oM}^1 : U_{\oM}^n] = q^{4(n-1)}. 
\end{equation}
	
\noindent (3) Since $U_{\oJ}^n \triangleleft \gp{K}_{\oJ}$, the function $g_n$ is invariant by $\gp{K}_{\oJ}$ by conjugation. We get by (1)
\begin{multline*}
	\vO(g_n; \E, (a,b)) \\
	= \frac{\Vol \left( \iota(\E^{\times}) \backslash \iota(\E^{\times}) \gp{K}_{\oJ} \right)}{\Vol(U_{\oJ}^n)} \cdot \left\{ \id_{U_{\oJ}^n} \left( \begin{pmatrix} a & \\ & b \end{pmatrix} \right) + \sum_{r=1}^{\infty} q^{r-1}(q-1) \cdot \id_{U_{\oJ}^n} \left( \begin{pmatrix} a & \varpi^{-r}(a-b) \\ & b \end{pmatrix} \right) \right\} \\
	= \frac{\Vol \left( \iota(\E^{\times}) \backslash \iota(\E^{\times}) \gp{K}_{\oJ} \right)}{\Vol(U_{\oJ}^n)} \cdot 
	\begin{cases}
		\id_{U_{\vo}^k} \left( a \right) \id_{U_{\vo}^k} \left( b \right) \left\{ 1 + \sum_{r=1}^{\infty} q^{r-1}(q-1) \cdot \id_{\vp^{k+r}} \left( a-b \right) \right\} & \text{if } n = 2k \\
		\id_{U_{\vo}^{k+1}} \left( a \right) \id_{U_{\vo}^{k+1}} \left( b \right) \left\{ 1 + \sum_{r=1}^{\infty} q^{r-1}(q-1) \cdot \id_{\vp^{k+r}} \left( a-b \right) \right\} & \text{if } n = 2k+1
	\end{cases} \\
	= \frac{\Vol \left( \iota(\E^{\times}) \backslash \iota(\E^{\times}) \gp{K}_{\oJ} \right)}{\Vol(U_{\oJ}^n)} \cdot \frac{\id_{U_{\vo}^{\lceil \frac{n}{2} \rceil}}(a) \id_{U_{\vo}^{\lceil \frac{n}{2} \rceil}}(b)}{q^{\lfloor \frac{n}{2} \rfloor} \cdot \norm[a-b]_{\F}}.
\end{multline*}
	The stated formula follows readily from the indices computation
\begin{equation} \label{eq: JUnitsInd}
	[\oJ^{\times} : U_{\oJ}^1] = \extnorm{\mathbb{F}_q^{\times}}^2 = (q-1)^2, \quad [U_{\oJ}^1 : U_{\oJ}^n] = q^{2(n-1)}. 
\end{equation}
\end{proof}

	We turn to non-split $\E$. Let $\vO_{\E}$, resp. $\vP_{\E}$ be the valuation ring, resp. valuation ideal, of $\E$. Write $U_{\E}^n := (1+\vP_{\E}^n) \cap \vO_{\E}^{\times}$ for $n \in \Z_{\geq 0}$. Let $e=e(\E/\F)$ be the ramification index of $\E/\F$. We start with $\varphi_n$.
	
\begin{lemma} \label{lem: OrbIntNSD}
	Consider non-split $\E/\F$. We have the following formulae for $n \in \Z_{\geq 0}$
	$$ \vO(\varphi_n; \E, x) = \frac{\Vol \left( \iota'(\E^{\times}) \backslash \D^{\times}(\F) \right)}{\Vol(U_{\D}^n)} \cdot \id_{U_{\E}^{\lceil \frac{en}{2} \rceil}}(x) = \frac{2}{e \Vol(\vO_{\E}^{\times})} \cdot \id_{U_{\E}^{\lceil \frac{en}{2} \rceil}}(x) \cdot 
	\begin{cases}
		1 & \text{if } n = 0 \\
		q^{2n}(1-q^{-2}) & \text{if } n \geq 1
	\end{cases}. $$
\end{lemma}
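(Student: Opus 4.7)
The plan rests on the observation that, in sharp contrast to $U_{\oM}^n$ and $U_{\oJ}^n$, the principal congruence subgroup $U_{\D}^n$ is normal in all of $\D^{\times}(\F)$. Indeed $\vP_{\D}^n = \varpi_{\D}^n \vO_{\D}$ is a two-sided ideal of the unique maximal order $\vO_{\D}$ of the division algebra, so its normalizer in $\D^{\times}(\F)$ is the full group; hence $g \vP_{\D}^n g^{-1} = \vP_{\D}^n$ for every $g \in \D^{\times}(\F)$, and $U_{\D}^n = 1 + \vP_{\D}^n$ is normal (trivially so for $n = 0$, where $U_{\D}^0 = \vO_{\D}^{\times}$). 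Consequently $\varphi_n$ is a central function, the integrand $\varphi_n(g^{-1} \iota'(x) g) = \varphi_n(\iota'(x))$ is constant on the coset space, and the orbital integral collapses to the product
$$ \vO(\varphi_n; \E, x) = \varphi_n(\iota'(x)) \cdot \Vol \left( \iota'(\E^{\times}) \backslash \D^{\times}(\F) \right). $$

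Next I would identify the support. The normalized valuation $v_{\D}$ on $\D(\F)$ satisfies $v_{\D} = v \circ \nu_{\D}$, and together with $\nu_{\D} \circ \iota' = \Nrm_{\E/\F}$ and the standard relation $v \circ \Nrm_{\E/\F} = f(\E/\F) \cdot v_{\E}$ on $\E^{\times}$, this gives $v_{\D} \circ \iota' = f \cdot v_{\E}$. Thus $\iota'(x) \in U_{\D}^n$ is equivalent to $v_{\E}(x-1) \geq \lceil n / f \rceil = \lceil en/2 \rceil$, where the last equality uses $ef = 2$. This yields the characteristic function $\id_{U_{\E}^{\lceil en/2 \rceil}}(x)$ and proves the first displayed equality of the lemma.

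For the closed form I would evaluate $\Vol(\iota'(\E^{\times}) \backslash \D^{\times}(\F))$ and $\Vol(U_{\D}^n)$ separately. Since $\D^{\times}(\F)/\vO_{\D}^{\times} \simeq \Z$ via $v_{\D}$, and since the image of $\iota'(\E^{\times})$ in this quotient is $f \Z$ with $\iota'(\E^{\times}) \cap \vO_{\D}^{\times} = \iota'(\vO_{\E}^{\times})$, one has the disjoint coset decomposition $\D^{\times}(\F) = \bigsqcup_{i=0}^{f-1} \iota'(\E^{\times}) \varpi_{\D}^{i} \vO_{\D}^{\times}$, giving
$$ \Vol \left( \iota'(\E^{\times}) \backslash \D^{\times}(\F) \right) = \frac{f \Vol(\vO_{\D}^{\times})}{\Vol(\vO_{\E}^{\times})} = \frac{2 \Vol(\vO_{\D}^{\times})}{e \Vol(\vO_{\E}^{\times})}. $$
The residue field $\vO_{\D}/\vP_{\D}$ is $\mathbb{F}_{q^2}$, so the standard filtration argument gives $[\vO_{\D}^{\times} : U_{\D}^1] = q^2 - 1$ and $[U_{\D}^1 : U_{\D}^n] = q^{2(n-1)}$, whence $[\vO_{\D}^{\times} : U_{\D}^n] = q^{2n}(1 - q^{-2})$ for $n \geq 1$. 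Combining these (and discarding the factor $[\vo^{\times} : \nu_{\D}(U_{\D}^n)]$ as explained in the Remark following Proposition \ref{prop: ArithMatch}) yields the stated second equality.

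There is no real obstacle: normality of $U_{\D}^n$ in $\D^{\times}(\F)$ degenerates the integral into a volume times a single value. The only care needed is the careful bookkeeping of the ramification index $e = e(\E/\F)$ when translating between the valuations on $\E$ and $\D(\F)$ and when counting the $f$ cosets of $\iota'(\E^{\times}) \vO_{\D}^{\times}$ in $\D^{\times}(\F)$.
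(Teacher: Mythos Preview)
Your proposal is correct and follows exactly the paper's approach: the paper's proof is the two-line sketch ``It suffices to note that $U_{\D}^n \triangleleft \D^{\times}(\F)$ and $\vP_{\D}^n \cap \iota'(\E) = \iota' \left( \vP_{\E}^{\lceil \frac{en}{2} \rceil} \right)$. Details are left to the reader,'' and you have simply filled in those details---the normality collapses the integral, the valuation bookkeeping $v_{\D}\circ\iota' = f\cdot v_{\E}$ yields the filtration intersection, and the coset/index counts give the explicit constant.
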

\begin{proof}
	It suffices to note that $U_{\D}^n \triangleleft \D^{\times}(\F)$ and $\vP_{\D}^n \cap \iota'(\E) = \iota' \left( \vP_{\E}^{\lceil \frac{en}{2} \rceil} \right)$. Details are left to the reader.
\end{proof}

\noindent The case for $f_n$ and $g_n$ requires more arithmetic tools of orders. Recall all quadratic $\vo$-sub-orders of $\vO_{\E}$ are parametrized by $r \in \Z_{\geq 0}$ with $L_r := \vo + \varpi^r \vO_{\E}$ (see \cite[\S 4.1]{CWZ21} for example).

\begin{definition} \label{def: OpEmb}
	Let $B \subset \vO_{\E}$ be an $\vo$-sub-order and $\vO \in \{ \oM, \oJ \}$. An $\F$-embedding $\iota: \E \to \Mat_2(\F)$ is called \emph{optimal} with respect to $B$ and $\vO$ if $\iota(\E) \cap \vO = \iota(B)$. The set of such embeddings is denoted by $\Op(B, \vO)$. It is acted by $\gp{K}_{\vO}$ from right by $\iota^{g}(x) := g^{-1} \iota(x) g$ for any $\iota \in \Op(B, \vO)$ and $g \in \gp{K}_{\vO}$.
\end{definition}

\begin{theorem} \label{thm: OpEmbNr}
	The number of orbits of $\gp{K}_{\vO}$ on $\Op(L_r, \vO)$ is $1$ unless $e=1, r=0$ and $\vO = \oJ$, in which case $\Op(\vO_{\E}, \oJ) = \emptyset$.
\end{theorem}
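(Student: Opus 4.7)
My plan is to reinterpret the orbit problem via lattices in the Bruhat--Tits tree $\mathcal{T}$ of $\PGL_2(\F)$. Fix some embedding $\iota_0 : \E \hookrightarrow \Mat_2(\F)$, so that $\F^2$ becomes a one-dimensional $\E$-vector space; the centralizer of $\iota_0(\E)$ in $\GL_2(\F)$ is then $\iota_0(\E^{\times})$, so by Skolem--Noether every embedding in the $\GL_2(\F)$-orbit of $\iota_0$ is of the form $\iota_0^g$ for a well-defined coset in $\iota_0(\E^{\times}) \backslash \GL_2(\F)$. Passing further to $\gp{K}_{\vO}$-orbits and using the standard identifications of $\GL_2(\F)/\gp{K}_{\oM}$ with the vertex set $\mathcal{T}^{(0)}$ and of $\GL_2(\F)/\gp{K}_{\oJ}$ with the unoriented edge set $\mathcal{T}^{(1)}$, I obtain a bijection between $\gp{K}_{\vO}$-orbits on $\Op(L_r, \vO)$ and $\iota_0(\E^{\times})$-orbits on an appropriate subset of $\mathcal{T}$. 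Under this dictionary, the optimality of $\iota$ translates into the \emph{multiplier condition} $B(\Lambda) := \{x \in \E : x\Lambda \subset \Lambda\} = L_r$ on a vertex lattice $[\Lambda]$, respectively $B(\Lambda_0) \cap B(\Lambda_1) = L_r$ on an edge, i.e. a lattice chain $\Lambda_0 \supsetneq \Lambda_1 \supsetneq \varpi \Lambda_0$.

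The $\vO = \oM$ case then closes immediately: a lattice with multiplier $L_r$ is an invertible fractional $L_r$-ideal, and $L_r$ is a one-dimensional Noetherian local ring (its unique maximal ideal is $L_r \cap \vP_{\E}$), so $\mathrm{Pic}(L_r) = 1$. Hence all such lattices are $\E^{\times}$-translates of $L_r$ itself, giving exactly one $\gp{K}_{\oM}$-orbit. For the $\vO = \oJ$ case I would first pin down the fixed-point set of $\iota_0(\E^{\times})$ on $\mathcal{T}$: it is the single vertex $[\vO_{\E}]$ when $e=1$, and the single edge $\{[\vO_{\E}], [\varpi_{\E} \vO_{\E}]\}$ (swapped by $\iota_0(\varpi_{\E})$) when $e=2$. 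A direct calculation of the multipliers of the immediate neighbors of $[\vO_{\E}]$, extended by induction on tree-distance, then yields the key lemma: $B(\Lambda) = L_d$ whenever $[\Lambda]$ lies at distance $d$ from this fixed-point set, so the joint multiplier of an edge equals $L_{\max(d_0, d_1)}$.

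With this multiplier--distance dictionary in hand, the conclusions become transparent. The exceptional case $e=1, r=0$ demands an edge with both endpoints at distance zero from the single fixed vertex, which is impossible in a tree; equivalently, an embedding $\iota(\vO_{\E}) \subset \oJ$ would descend to $\mathbb{F}_{q^2} = \vO_{\E}/\vP_{\E} \hookrightarrow \oJ/\VP_{\oJ} \cong \mathbb{F}_q \oplus \mathbb{F}_q$, a contradiction. In every remaining case I would establish transitivity of the $\iota_0(\E^{\times})$-action on edges of a fixed joint multiplier $L_r$ by induction on $r$: for $r=1$ the group $\iota_0(\vO_{\E}^{\times})$ acts on the neighbors of $[\vO_{\E}]$ through $(\vO_{\E}/\varpi \vO_{\E})^{\times}$, which is $\mathbb{F}_{q^2}^{\times}$ acting transitively on $\mathbb{P}^1(\mathbb{F}_q)$ (unramified case) or $\mathbb{F}_q^{\times}$ acting transitively on the $q$ non-apartment neighbors (ramified case, with the flip $\iota_0(\varpi_{\E})$ gluing the two sides of the apartment); the inductive step is analogous, with $\vO_{\E}^{\times}$ replaced by the stabilizer $\iota_0(L_{r-1}^{\times})$ of a distance-$(r{-}1)$ vertex. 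The main technical obstacle is the multiplier--distance formula itself, which demands careful lattice computations at the first level---particularly in the ramified case, where the apartment is an edge rather than a vertex---before the induction can be bootstrapped.
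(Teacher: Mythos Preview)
The paper does not actually prove this theorem: its entire proof is the one-line citation ``This is (part of) \cite[Theorem II.3.2]{Vi80}.'' Your proposal therefore goes well beyond what the paper offers, supplying a genuine argument via the Bruhat--Tits tree.

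The approach you outline is correct and is one of the standard routes to this local result (and close in spirit to Vign\'eras's own treatment): translate $\gp{K}_{\vO}$-orbits of optimal embeddings into $\iota_0(\E^{\times})$-orbits on vertices or edges of $\mathcal{T}$, identify the optimality condition with the multiplier condition $B(\Lambda)=L_r$, and use that $L_r$ is a one-dimensional local ring (so $\mathrm{Pic}(L_r)=1$, and for quadratic orders ``proper $=$ invertible'') to get transitivity on vertices at each fixed distance from the fixed set. One simplification you may not have noticed: once transitivity on vertices at distance $r$ is in hand, transitivity on edges with joint multiplier $L_r$ is immediate, because in a tree every vertex at distance $r\geq 1$ from the fixed set has a \emph{unique} neighbour at distance $r-1$; so the edge is determined by its far endpoint and no separate inductive link-transitivity argument is needed. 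Your residue-field obstruction $\mathbb{F}_{q^2}\hookrightarrow \mathbb{F}_q\oplus\mathbb{F}_q$ for the exceptional case is exactly the right observation, and incidentally is reused in the paper's proof of Lemma~\ref{lem: OpEmbFil}.
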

\begin{proof}
	This is (part of) \cite[Theorem \Rmnum{2}.3.2]{Vi80}.
\end{proof}

\begin{lemma} \label{lem: OpEmbFil}
	(1) There exists $\theta_0 \in \vO_{\E}$ such that $L_r = \vo[\varpi^r \theta_0]$ as $\vo$-algebra.

\noindent (2) Let $\iota \in \Op(L_r, \vO)$ for $\vO \in \{ \oM, \oJ \}$ with Jacobson radical $\VP$. Then we have for $n \in \Z_{\geq 0}$
	$$ \VP^n \cap \iota(\E) = 
	\begin{cases}
		\iota(\varpi^n L_r) & \text{if } \vO = \oM \\
		\iota(\varpi^k L_r) & \text{if } \vO = \oJ \text{ and } n=2k \\
		\iota(\varpi^k L_{r-1}) & \text{if } \vO = \oJ \text{ and } n=2k-1, r \geq 1 \\
		\iota(\vP_{\E}^n) & \text{if } \vO = \oJ \text{ and } n=2k-1, r = 0
	\end{cases}. $$
\end{lemma}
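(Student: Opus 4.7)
The plan is to handle (1) by first establishing that $\vO_{\E}$ is $\vo$-monogenic. In the unramified case take $\theta_0 \in \vO_{\E}$ to be a lift of a generator of the residue extension $\vO_{\E}/\vP_{\E} \simeq \mathbb{F}_{q^2}$ over $\mathbb{F}_q$; in the ramified case take $\theta_0 = \varpi_{\E}$ to be a uniformizer. Either way $\{1, \theta_0\}$ is an $\vo$-basis of $\vO_{\E}$, so $L_r = \vo + \varpi^r \vO_{\E} = \vo + \vo(\varpi^r \theta_0)$. The identity $\vo[\varpi^r\theta_0] = L_r$ then follows by noting $(\varpi^r\theta_0)^2 = \varpi^r \alpha(\varpi^r\theta_0) + \varpi^{2r}\beta \in L_r$, where $\theta_0^2 = \alpha\theta_0 + \beta$ with $\alpha, \beta \in \vo$.

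For (2), three of the four cases reduce to the identity $\VP^n = \varpi^m \vO$, namely $(\vO, m) = (\oM, n)$ for any $n$, and $(\oJ, k)$ when $n = 2k$ (using $\Pi^2 = \varpi I$). In these cases one factors out the scalar: $\iota(\E) \cap \varpi^m \vO = \varpi^m(\iota(\E) \cap \vO) = \varpi^m \iota(L_r) = \iota(\varpi^m L_r)$, matching the stated formulae.

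The main obstacle is the remaining odd case $\vO = \oJ$, $n = 2k-1$, since $\VP_{\oJ}^{2k-1} = \varpi^{k-1}\VP_{\oJ} = \begin{pmatrix} \vp^k & \vp^{k-1} \\ \vp^k & \vp^k \end{pmatrix}$ is no longer a scalar multiple of $\oJ$. The plan is to use Theorem \ref{thm: OpEmbNr} to fix a single explicit representative of $\iota$ up to $\gp{K}_{\oJ}$-conjugation, which preserves each $\VP_{\oJ}^n$, and then compute entry-wise. Writing $\iota(\theta_0) = \begin{pmatrix} 0 & c \\ d & \alpha \end{pmatrix}$ with $cd = \beta$, one chooses $(c,d)$ to ensure the optimality $\iota(\E) \cap \oJ = \iota(L_r)$: concretely $(c,d) = (\beta\varpi^{-r}, \varpi^r)$ in the unramified case, $(c,d) = (\varpi^{-r}, \beta\varpi^r)$ for ramified $r \geq 1$, and $(c,d) = (1, \beta)$ in the ramified $r = 0$ case (the unramified $r = 0$ case being excluded by Theorem \ref{thm: OpEmbNr}). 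The four entry-wise conditions for $\iota(a + b\theta_0) \in \VP_{\oJ}^{2k-1}$ then reduce to $v(a) \geq k$ and the binding $v(b) \geq k-1+r$ for $r \geq 1$, giving $\iota(\E) \cap \VP_{\oJ}^{2k-1} = \iota(\vp^k + \vp^{k-1+r}\theta_0) = \iota(\varpi^k L_{r-1})$ via part (1); for ramified $r = 0$ the binding becomes $v(b) \geq k - v(\beta) = k-1$, identifying the intersection with $\vP_{\E}^{2k-1}$. The main subtlety will be tracking which matrix entry yields the binding constraint as $v(\alpha)$ and $v(\beta)$ shift between the ramified and unramified cases, but once the optimality of the explicit embeddings is checked directly, the rest of the computation is routine.
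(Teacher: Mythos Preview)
Your proposal is correct. Part (1) and the ``easy'' cases of (2) match the paper's argument exactly. For the hard case $\vO = \oJ$, $n$ odd, you take a genuinely different route: you invoke Theorem \ref{thm: OpEmbNr} to reduce to a single explicit optimal embedding $\iota(\theta_0) = \begin{pmatrix} 0 & c \\ d & \alpha \end{pmatrix}$ with carefully chosen $(c,d)$, and then read off $\VP_{\oJ}^{2k-1} \cap \iota(\E)$ entry by entry. The paper instead argues structurally: it reduces to $n=1$, observes that $\VP_{\oJ} \cap \iota(\E)$ is an $L_r$-ideal strictly between $L_r$ and $\varpi L_r$, and then rules out $\varpi L_r$ by noting that $L_r/\varpi L_r \simeq \mathbb{F}_q[X]/(X^2)$ is non-reduced while $\oJ/\VP_{\oJ} \simeq \mathbb{F}_q \oplus \mathbb{F}_q$ is reduced, forcing the intersection to be the unique intermediate ideal $\varpi L_{r-1}$ (and similarly $\vP_{\E}$ when $r=0$). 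The paper's argument is coordinate-free and treats the cases $r \geq 1$ uniformly without splitting on the ramification of $\E/\F$; your approach is more elementary and self-contained, avoiding the identification of intermediate ideals and the reduced/non-reduced dichotomy, at the cost of three separate explicit embeddings and a little bookkeeping of $v(\alpha), v(\beta)$.
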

\begin{proof}
	(1) In fact any $\theta_0 \in \vO_{\E}$ satisfying $\vO_{\E} = \vo \oplus \vo \theta_0$ is a good choice.
	
\noindent (2) The formula for $\vO = \oM$ is easy, since $\VP = \varpi \oM$. For $\vO = \oJ$, we first note that the case $n=2k$ is easy, since $\VP^{2k} = \varpi^k \oJ$. We also note that the case $n=2k-1$ follows from the case $n=1$, since $\VP^{2k-1} = \varpi^{k-1} \VP$. We only need to prove the case $n=1$. For simplicity of notation we regard $\E$ as a subset of $\Mat_2(\F)$ via $\iota$. Therefore we have $\oJ \cap \E = L_r$ by assumption. Suppose $\theta_0^2 + a \theta_0 + b = 0$ for some $a,b \in \vo$. Let $f_r(X) = X^2 + a \varpi^r X + b \varpi^{2r}$, which is the minimal polynomial of $\varpi^r \theta_0$. If $r \geq 1$, then
	$$ L_r/\varpi L_r \simeq \vo[X]/(\vp, f_r(X)) \simeq \mathbb{F}_q[X]/(X^2) $$
is a non-reduced ring, i.e., a ring containing non-zero nilpotent elements. Note that there is a unique non-zero ideal in $\mathbb{F}_q[X]/(X^2)$. Hence there is a unique $L_r$-ideal lying between $L_r$ and $\varpi L_r$, which is $\varpi L_{r-1}$. We have $L_r/\varpi L_{r-1} \simeq \mathbb{F}_q$. Now that $\oJ/\VP \simeq \mathbb{F}_q \oplus \mathbb{F}_q$ is reduced, we have $\VP \cap \E \neq \varpi L_r$. Otherwise we would obtain the following embedding of rings, which is absurd:
	$$ \begin{matrix}
		\mathbb{F}_q[X]/(X^2) & \hookrightarrow & \mathbb{F}_q \oplus \mathbb{F}_q \\
		\parallel & & \parallel \\
		L_r/\varpi L_r & \hookrightarrow & \oJ/\VP
	\end{matrix}. $$ 
	We also have $\VP \cap \E \neq L_r$ since $1 \notin \VP$. We conclude $\VP \cap \E = \varpi L_{r-1}$ in this case. Finally for $r=0$, the extension $\E/\F$ must be ramified by Theorem \ref{thm: OpEmbNr}. There is a unique $\vO_{\E}$-ideal lying between $\vO_{\E}$ and $\varpi \vO_{\E} = \vP_{\E}^2$, which is $\vP_{\E}$. Note that $\vO_{\E}/\vP_{\E}^2$ is non-reduced, too. Repeating the previous argument we conclude $\VP \cap \E = \vP_{\E}$.
\end{proof}

\begin{lemma} \label{lem: IwaDVar}
	For $\vO \in \{ \oM, \oJ \}$, a quadratic field $\E/\F$ of ramification index $e$, and any $\F$-embedding $\iota: \E \to \Mat_2(\F)$, there exist $a_r \in \GL_2(\F)$ such that $\iota^{a_r} \in \Op(L_r, \vO)$ and
	$$ \GL_2(\F) = \sideset{}{_{r \geq 0}} \bigsqcup \iota(\E^{\times}) a_r \gp{K}_{\oM}, \quad \GL_2(\F) = \sideset{}{_{r \geq 2-e}} \bigsqcup \iota(\E^{\times}) a_r \gp{K}_{\oJ}. $$
	Moreover, we have the volume computation
	$$ \frac{\Vol \left( \iota(\E^{\times}) \backslash \iota(\E^{\times}) a_r \gp{K}_{\oM} \right)}{\Vol(\oM^{\times})} = \frac{1}{\Vol(L_r^{\times})} = \frac{1}{\Vol(\vO_{\E}^{\times})} \cdot \begin{cases}
		q^r \frac{1-q^{-2}}{1-q^{-e}} & \text{if } r \geq 1 \\
		1 & \text{if } r = 0
	\end{cases}; $$
	$$ \frac{\Vol \left( \iota(\E^{\times}) \backslash \iota(\E^{\times}) a_r \gp{K}_{\oJ} \right)}{\Vol(\oJ^{\times})} = \frac{1}{\Vol(L_r^{\times})} \cdot
	\begin{cases}
		2 \\
		1
	\end{cases} = \frac{1}{\Vol(\vO_{\E}^{\times})} \cdot
	\begin{cases}
		2q^r \frac{1-q^{-2}}{1-q^{-e}} & \text{if } r \geq 1 \\
		1 & \text{if } r = 0
	\end{cases}. $$
\end{lemma}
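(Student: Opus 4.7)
The plan is to use Theorem \ref{thm: OpEmbNr}, which parametrizes optimal embeddings up to $\gp{K}_\vO$-conjugacy, to parametrize the double cosets $\iota(\E^\times)\backslash \GL_2(\F)/\gp{K}_\vO$ by the sub-orders $L_r\subset \vO_\E$. To construct the $a_r$, I pick an arbitrary $\iota_r\in \Op(L_r,\vO)$ for each admissible $r$ (that is, $r\ge 0$ when $\vO=\oM$; $r\ge 2-e$ when $\vO=\oJ$, the excluded case $(r,\vO,e)=(0,\oJ,1)$ being exactly the emptiness clause of Theorem \ref{thm: OpEmbNr}), and invoke Skolem--Noether to write $\iota_r=\iota^{a_r}$ for some $a_r\in\GL_2(\F)$.

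For the double coset decomposition, the key observation is that the assignment $g\mapsto (\iota^g)^{-1}\bigl(\iota^g(\E)\cap\vO\bigr)$ is well-defined on $\iota(\E^\times)\backslash\GL_2(\F)/\gp{K}_\vO$ (constant under left $\iota(\E^\times)$-translation since $\iota(\E)$ is commutative, and constant under right $\gp{K}_\vO$-translation since $\gp{K}_\vO$ normalises $\vO$) and yields a quadratic $\vo$-sub-order of $\vO_\E$, hence equals some $L_r$ uniquely. Thus $\iota^g\in\Op(L_r,\vO)$, and Theorem \ref{thm: OpEmbNr} produces $k\in\gp{K}_\vO$ with $\iota^g=\iota^{a_r k}$, i.e., $g(a_r k)^{-1}$ centralizes $\iota(\E)$ and hence lies in $\iota(\E^\times)$. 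This places $g\in\iota(\E^\times)a_r\gp{K}_\vO$; disjointness follows from the uniqueness of $r$.

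For the volume computation, I pass to $\PGL_2(\F)$ (legitimate because $\F^\times\subset\iota(\E^\times)\cap\gp{K}_\vO$) and compute the stabilizer of $a_r\gp{K}_\vO/\F^\times$ in $\iota(\E^\times)/\F^\times$. Optimality yields $\iota(\E^\times)\cap a_r\vO^\times a_r^{-1}=\iota(L_r^\times)$, so for $\vO=\oM$ (where $\gp{K}_\oM=\F^\times\oM^\times$) this is the full stabilizer and the claimed formula $\Vol(\oM^\times)/\Vol(L_r^\times)$ drops out. For $\vO=\oJ$, writing $\gp{K}_\oJ/\F^\times=\oJ^\times/\vo^\times\sqcup\Pi\oJ^\times/\vo^\times$ (from $\Pi^2=\varpi\,I\in\F^\times$) breaks the double coset a priori as
\[
 \iota(\E^\times)a_r\gp{K}_\oJ=\iota(\E^\times)a_r\oJ^\times\ \cup\ \iota(\E^\times)a_r\Pi\oJ^\times,
\]
and these two pieces coincide iff some $x\in\E^\times$ satisfies $\iota^{a_r}(x)\in\Pi\oJ^\times$. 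Since $\Pi\oJ^\times=\{g\in\VP_\oJ:v_\F(\det g)=1\}$ and $\det\circ\iota^{a_r}=\Nrm_{\E/\F}$, this amounts to finding $x$ in the preimage of $\VP_\oJ$ with $v_\F(\Nrm_{\E/\F}(x))=1$. Lemma \ref{lem: OpEmbFil} identifies this preimage as $\varpi L_{r-1}$ for $r\ge 1$ (whose elements have $v_\F(\Nrm_{\E/\F})\ge 2$, so the two pieces are disjoint---factor $2$) and as $\vP_\E$ for $r=0$, which forces $e=2$ and contains an $\E$-uniformizer of norm valuation $1$ (so the pieces coincide---factor $1$).

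The main obstacle is precisely this dichotomy for $\vO=\oJ$: it couples the filtration information of Lemma \ref{lem: OpEmbFil} with the norm-determinant compatibility. The final simplification
\[
 \frac{1}{\Vol(L_r^\times)}=\frac{1}{\Vol(\vO_\E^\times)}\cdot q^r\cdot\frac{1-q^{-2}}{1-q^{-e}}\qquad(r\ge 1)
\]
is a routine index computation $[\vO_\E^\times:L_r^\times]=(q+1)q^{r-1}$ in the unramified case and $q^r$ in the ramified case, obtainable from the exact sequences $L_r^\times/(1+\varpi^r\vO_\E)\simeq(\vo/\vp^r)^\times$ and $\vO_\E^\times/(1+\varpi^r\vO_\E)\simeq(\vO_\E/\vp_\E^{er})^\times$.
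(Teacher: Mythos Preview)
Your proposal is correct and follows essentially the same route as the paper: both derive the double coset decomposition from the orbit parametrization of $\F$-embeddings by the suborders $L_r$ via Theorem~\ref{thm: OpEmbNr} and Skolem--Noether, and both obtain the volume formulas from the optimality relation $\iota^{a_r}(\E)\cap\vO=\iota^{a_r}(L_r)$ together with Lemma~\ref{lem: OpEmbFil} and the index $[\vO_\E^\times:L_r^\times]$. The only cosmetic difference is that for $\vO=\oJ$ the paper computes the full intersection $\gp{K}_\oJ\cap\iota^{a_r}(\E^\times)$ directly, whereas you phrase it as checking whether the two $\oJ^\times$-cosets of $\gp{K}_\oJ$ merge; both arguments hinge on the same determinant/norm test against $\VP_\oJ\cap\iota^{a_r}(\E)$.
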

\begin{proof}
	By Skolem--Noether's theorem any two $\F$-embeddings of $\E$ into $\Mat_2(\F)$ are conjugated by some element in $\GL_2(\F)$, proving the existence of $a_r$. Since every $\F$-embedding obviously belongs to $\Op(L_r, \vO)$ for some $r \in \Z_{\geq 0}$, the group $\GL_2(\F)$ acts transitively from right on the disjoint union
	$$ \sideset{}{_{r \in \Z_{\geq 0}}} \bigsqcup \Op(L_r, \vO) = \iota^{\GL_2(\F)}. $$
	The stated double coset decomposition then follows readily from Theorem \ref{thm: OpEmbNr}, since the stabilizer subgroup of $\iota$ in $\GL_2(\F)$ is precisely $\iota(\E^{\times})$. For the ``moreover'' part note that $L_r^{\times} = \vo^{\times} U_{\E}^{er}$, implying
	$$ \frac{L_k^{\times}}{L_{k+r}^{\times}} = \frac{\vo^{\times} U_{\E}^{ek}}{\vo^{\times} U_{\E}^{e(k+r)}} \simeq \frac{U_{\E}^{ek}}{U_{\vo}^k U_{\E}^{e(k+r)}} \simeq \frac{U_{\E}^{ek}/U_{\E}^{e(k+r)}}{U_{\vo}^k U_{\E}^{e(k+r)} / U_{\E}^{e(k+r)}} \simeq \frac{U_{\E}^{ek}/U_{\E}^{e(k+r)}}{U_{\vo}^k / U_{\vo}^{k+r}}, \quad \forall \ k \geq 0, r \geq 1. $$
	Hence we deduce the equality
\begin{equation} \label{eq: QOrdUnitsQ}
	\extnorm{L_k^{\times}/L_{k+r}^{\times}} = \frac{\extnorm{U_{\E}^{ek}/U_{\E}^{e(k+r)}}}{\extnorm{U_{\vo}^k / U_{\vo}^{k+r}}} = 
	\begin{cases}
		q^r & \text{if } k > 0 \text{ or } e = 2 \\
		q^r(1+q^{-1}) & \text{if } k = 0 \text{ and } e = 1
	\end{cases}. 
\end{equation}
	The volume computation for $\oM$ follows easily from the case $k=0$ of \eqref{eq: QOrdUnitsQ}, and $\gp{K}_{\oM} = \varpi^{\Z} \oM^{\times}$, implying
	$$ \iota(\E^{\times}) \backslash \iota(\E^{\times}) a_r \gp{K}_{\oM} = \iota(\E^{\times}) \backslash \iota(\E^{\times}) a_r \oM^{\times} \simeq \iota^{a_r}(L_r^{\times}) \backslash \oM^{\times}. $$
	For $\oJ$ we read from Lemma \ref{lem: OpEmbFil} (2) that
	$$ \left( \oJ - \VP_{\oJ} \right) \cap \iota^{a_r}(\E) =
	\begin{cases}
		\iota^{a_r}(L_r - \varpi L_{r-1}) & \text{if } r \geq 1 \\
		\iota^{a_0}(\vO_{\E}^{\times}) & \text{if } r = 0
	\end{cases}, $$
	$$ \left( \VP_{\oJ} - \VP_{\oJ}^2 \right) \cap \iota^{a_r}(\E) =
	\begin{cases}
		\iota^{a_r}(\varpi L_{r-1} - \varpi L_r) & \text{if } r \geq 1 \\
		\iota^{a_0}(\varpi_{\E}\vO_{\E}^{\times}) & \text{if } r = 0
	\end{cases}. $$
	Now that $\oJ^{\times} \Pi^n = \left\{ x \in \VP_{\oJ}^n - \VP_{\oJ}^{n+1} \ \middle| \ \det(x) \in \varpi^n \vo^{\times} \right\}$, we deduce
	$$ \oJ^{\times} \cap \iota^{a_r}(\E) = 
	\begin{cases}
		\iota^{a_r}(L_r^{\times}) & \text{if } r \geq 1 \\
		\iota^{a_0}(\vO_{\E}^{\times}) & \text{if } r = 0
	\end{cases}, \quad \oJ^{\times} \Pi \cap \iota^{a_r}(\E) = 
	\begin{cases}
		\emptyset & \text{if } r \geq 1 \\
		\iota^{a_0}(\varpi_{\E}\vO_{\E}^{\times}) & \text{if } r = 0
	\end{cases} $$
	$$ \Rightarrow \quad \gp{K}_{\oJ} \cap \iota^{a_r}(\E) = 
	\begin{cases}
		\varpi^{\Z} \iota^{a_r}(L_r^{\times}) & \text{if } r \geq 1 \\
		\iota^{a_0}(\E^{\times}) & \text{if } r = 0
	\end{cases}. $$
	The volume computation for $\oJ$ follows again from the case $k=0$ of \eqref{eq: QOrdUnitsQ} and
	$$ \iota(\E^{\times}) \backslash \iota(\E^{\times}) a_r \gp{K}_{\oJ} = \left( \iota^{a_r}(\E) \cap \gp{K}_{\oJ} \right) \backslash \gp{K}_{\oJ} \simeq 
	\begin{cases}
		\left( \iota^{a_r}(L_r^{\times}) \backslash \oJ^{\times} \right) \times \left( \varpi^{\Z} \backslash \Pi^{\Z} \right) & \text{if } r \geq 1 \\
		\iota^{a_0}(\vO_{\E}^{\times}) \backslash \oJ^{\times} & \text{if } r = 0
	\end{cases}. $$
\end{proof}

\begin{remark}
	One may find an elementary proof of the decompositions in Lemma \ref{lem: IwaDVar} in \cite[Proposition 2.18]{Wu1}.
\end{remark}

\begin{lemma} \label{lem: OrbIntNSp}
	Consider quadratic field $\E/\F$ with ramification index $e=e(\E/\F)$ and regular elements $x \in \E^{\times}-\F^{\times}$. Normalize the measure with $\Vol(\vO_{\E}^{\times})=1$. We have the following formulae for $n \in \Z_{\geq 1}$
\begin{multline*}
	\vO(f_n; \E, x) =\left\{ \id_{U_{\E}^{en}}(x) + \left( \sum_{r=1}^{\infty} q^r \cdot \id_{\left( 1 + \varpi^n L_r \right) \cap L_r^{\times}}(x) \right) \cdot \frac{1-q^{-2}}{1-q^{-e}} \right\} \cdot \\
	\begin{cases}
		1 & \text{if } n = 0 \\
		q^{4n}(1-q^{-1})(1-q^{-2}) & \text{if } n \geq 1
	\end{cases};
\end{multline*}
\begin{multline*}
	\vO(g_{n}; \E, x) = \left\{ \id_{e=2} \id_{U_{\E}^{n}}(x) + \sum_{r=1}^{\infty} 2q^r \frac{1-q^{-2}}{1-q^{-e}} \cdot \id_{\left( 1 + \varpi^{\lceil \frac{n}{2} \rceil} L_{r-\id_{2 \nmid n}} \right) \cap L_{r-\id_{2 \nmid n}}^{\times}}(x) \right\} \cdot \\
	\begin{cases}
		1 & \text{if } n = 0 \\
		q^{2n}(1-q^{-1})^2 & \text{if } n \geq 1
	\end{cases}.
\end{multline*}
\end{lemma}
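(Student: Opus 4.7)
The strategy mirrors that of Lemma \ref{lem: OrbIntSp} (non-split case): I would exploit the double coset decomposition in Lemma \ref{lem: IwaDVar} together with the normality $U_{\vO}^n \triangleleft \gp{K}_{\vO}$ to reduce each orbital integral to a sum over $r$ of a volume factor times an indicator $\id_{U_{\vO}^n}(\iota^{a_r}(x))$. Concretely, since $f_n$ (resp.\ $g_n$) is invariant under conjugation by $\gp{K}_{\oM}$ (resp.\ $\gp{K}_{\oJ}$), writing $g = \iota(\lambda) a_r k$ with $k \in \gp{K}_{\vO}$ gives $f_n(g^{-1}\iota(x)g) = f_n(\iota^{a_r}(x))$, so that
\[ \vO(f_n; \E, x) = \sum_{r \geq 0} \frac{\Vol(\iota(\E^{\times})\backslash \iota(\E^{\times}) a_r \gp{K}_{\oM})}{\Vol(U_{\oM}^n)} \, \id_{U_{\oM}^n}(\iota^{a_r}(x)), \]
and similarly for $g_n$ with $r \geq 2-e$.

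Next I would translate the indicator. Since $\iota^{a_r}(x) \in \iota^{a_r}(\E)$, the condition $\iota^{a_r}(x) \in 1 + \VP_{\vO}^n$ is equivalent, via $\iota^{a_r}$, to $x - 1$ lying in the $\vo$-module $\VP_{\vO}^n \cap \iota^{a_r}(\E)$, which is given explicitly by Lemma \ref{lem: OpEmbFil}. For $f_n$ this yields $x \in 1 + \varpi^n L_r$, and combined with the unit condition $\iota^{a_r}(x) \in \oM^{\times}$ (i.e.\ $x \in L_r^{\times}$) one obtains $\id_{(1+\varpi^n L_r) \cap L_r^{\times}}(x)$; the degenerate case $r=0$ specializes to $\id_{U_{\E}^{en}}(x)$ since $L_0 = \vO_{\E}$ and $\varpi^n \vO_{\E} = \vP_{\E}^{en}$. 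For $g_n$ the same procedure, applied case-by-case to the piecewise formula of Lemma \ref{lem: OpEmbFil}(2), produces $\id_{(1 + \varpi^{\lceil n/2 \rceil} L_{r - \id_{2 \nmid n}}) \cap L_{r - \id_{2\nmid n}}^{\times}}(x)$ for $r \geq 1$, while the exceptional $r=0$ contribution (present only for $e=2$, by Theorem \ref{thm: OpEmbNr}) collapses to $\id_{U_{\E}^n}(x)$ for both parities of $n$, using $\VP_{\oJ}^n \cap \iota^{a_0}(\E) = \iota^{a_0}(\vP_{\E}^n)$.

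Finally I would insert the numerics. The volume ratios from Lemma \ref{lem: IwaDVar} contribute $1$ at $r=0$ and $q^r(1-q^{-2})/(1-q^{-e})$ (resp.\ $2q^r(1-q^{-2})/(1-q^{-e})$ for $\oJ$) at $r \geq 1$, relative to $\Vol(\vO_{\E}^{\times}) = 1$. Multiplying by $[\oM^{\times}:U_{\oM}^n] = q^{4n}(1-q^{-1})(1-q^{-2})$ and $[\oJ^{\times}:U_{\oJ}^n] = q^{2n}(1-q^{-1})^2$ (valid for $n \geq 1$, from \eqref{eq: MUnitsInd} and \eqref{eq: JUnitsInd}) yields the stated closed forms; the $n=0$ specialization is immediate since the index is then $1$.

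The main obstacle is purely bookkeeping: one must track the parity of $n$ in the $\oJ$ case, handle the exceptional $r=0$ term only when $\E/\F$ is ramified, and verify that the resulting expressions fit into the uniform notation $L_{r - \id_{2 \nmid n}}$ used in the statement. The equivalence $(1 + \varpi^k L_{r-1}) \cap L_r^{\times} = (1 + \varpi^k L_{r-1}) \cap L_{r-1}^{\times} = 1 + \varpi^k L_{r-1}$ (which holds for $k, r \geq 1$, as one checks from $1 + \varpi^k L_{r-1} \subset L_r^{\times}$) shows that the choice of $L_{r-1}^{\times}$ in the intersection is harmless.
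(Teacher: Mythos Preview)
Your proposal is correct and follows exactly the paper's approach: decompose via Lemma \ref{lem: IwaDVar}, use $U_{\vO}^n \triangleleft \gp{K}_{\vO}$ to reduce to indicators at the coset representatives, identify these via Lemma \ref{lem: OpEmbFil}(2), and plug in the volume and index formulae from Lemma \ref{lem: IwaDVar} and \eqref{eq: MUnitsInd}--\eqref{eq: JUnitsInd}. Your final remark on $(1+\varpi^k L_{r-1}) \cap L_r^{\times} = (1+\varpi^k L_{r-1}) \cap L_{r-1}^{\times}$ is a useful clarification that the paper leaves implicit.
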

\begin{proof}
	 Note that $U_{\oM}^n \cap \iota^{a_r}(\E^{\times}) = \iota^{a_r}((1+\varpi^n L_r) \cap L_r^{\times})$ by Lemma \ref{lem: OpEmbFil} (2). Taking into account \eqref{eq: MUnitsInd} we deduce the first formula readily from Lemma \ref{lem: IwaDVar} via
	 $$ \vO(f_n; \E, x) = \sum_{r=0}^{\infty} \frac{\Vol\left( \iota(\E^{\times}) \backslash \iota(\E^{\times}) a_r \gp{K}_{\oM} \right)}{\Vol(\oM^{\times})} \cdot \id_{\left( 1 + \varpi^n L_r \right) \cap L_r^{\times}}(x) \cdot \begin{cases}
	 	1 & \text{if } n=0 \\
		q^{4n}(1-q^{-1})(1-q^{-2}) & \text{if } n \geq 1
	 \end{cases}. $$
	 Similarly by Lemma \ref{lem: OpEmbFil} (2) we have
	  $$ U_{\oJ}^n \cap \iota^{a_r}(\E^{\times}) = 
	  \begin{cases}
		\iota^{a_r}\left( (1+\varpi^k L_r) \cap L_r^{\times} \right) & \text{if } n=2k \\
		\iota^{a_r}\left( (1+\varpi^k L_{r-1}) \cap L_{r-1}^{\times} \right) & \text{if } n=2k-1, r \geq 1 \\
		\iota^{a_r}(U_{\E}^n) & \text{if } n=2k-1, r = 0
	\end{cases}. $$
	Taking into account \eqref{eq: JUnitsInd} we deduce the first formula readily from Lemma \ref{lem: IwaDVar} via
\begin{multline*} 
	\vO(g_{n}; \E, x) = \left\{ \id_{e=2} \cdot \frac{\Vol\left( \iota(\E^{\times}) \backslash \iota(\E^{\times}) a_0 \gp{K}_{\oJ} \right)}{\Vol(\oJ^{\times})} \cdot \id_{U_{\E}^{n}}(x) + \right. \\
	\left. \sum_{r=1}^{\infty} \frac{\Vol\left( \iota(\E^{\times}) \backslash \iota(\E^{\times}) a_r \gp{K}_{\oJ} \right)}{\Vol(\oJ^{\times})} \cdot \id_{\left( 1 + \varpi^{\lceil \frac{n}{2} \rceil} L_{r-\id_{2 \nmid n}} \right) \cap L_{r-\id_{2 \nmid n}}^{\times}}(x) \right\} \cdot 
	\begin{cases}
	 	1 & \text{if } n=0 \\
		q^{2n}(1-q^{-1})^2 & \text{if } n \geq 1
	 \end{cases}.
\end{multline*}
\end{proof}

\begin{proof}[Proof of Proposition \ref{prop: ArithMatch}]
	We check the conditions in Definition \ref{def: OrbIntM} (2). We readily verify the first condition (2.\rmnum{1}) by Lemma \ref{lem: OrbIntSp}. Applying Lemma \ref{lem: OrbIntNSp} and taking into account Lemma \ref{lem: OrbIntNSD} we get for non-split $\E$ and $n \geq 1$
\begin{align*}
	\vO(\widetilde{f}_0; \E, x) &= \tfrac{2}{e} \cdot \id_{\vO_{\E}^{\times}}(x) = \vO(\varphi_0; \E, x), \\
	\vO(\widetilde{f}_{2n}; \E, x) &= \tfrac{2}{e} \cdot \id_{U_{\E}^{en}}(x) \cdot q^{4n}(1-q^{-2}) = \vO(\varphi_{2n}; \E, x), \\
	\vO(\widetilde{f}_{2n-1}; \E, x) &= \tfrac{2}{e} \cdot \id_{U_{\E}^{\lceil \frac{e(2n-1)}{2} \rceil}}(x) \cdot q^{4n-2}(1-q^{-2}) = \vO(\varphi_{2n-1}; \E, x).
\end{align*}
	We have verified the second condition (2.\rmnum{2}) and conclude the stated matching.
\end{proof}

\section{Stable Orbital Integrals}

	Let $\B$ be a quaternion algebra over $\Q$ with reduced trace $\Tr = \Tr_{\B}$ and norm $\nu = \nu_{\B}$. Suppose $\B$ is unramified at $\infty$. Fix a maximal $\Z$-order $\vO_{\B}$ of $\B$. Suppose we have two (congruence) subgroups $\Gamma \triangleleft \widetilde{\Gamma} < \vO_{\B}^{\times}$, together with subgroups in finite adeles $\gp{K}_{\Gamma} \triangleleft \gp{K}_{\widetilde{\Gamma}} < \widehat{\vO}_{\B}^{\times} \subset \B^{\times}(\A_{\fin})$, where $\A_{\fin}$ is the ring of finite adeles of $\Q$, so that we have
	$$ \Gamma = \B^{\times}(\Q) \cap \gp{K}_{\Gamma}, \quad \widetilde{\Gamma} = \B^{\times}(\Q) \cap \gp{K}_{\widetilde{\Gamma}}, \quad \gp{K}_{\widetilde{\Gamma}} \cap \A_{\fin}^{\times} = \widehat{\Z}^{\times} $$
	and the following \emph{crucial assumption} as well as a consequence of it via \cite[Theorem 28.5.3]{Vo21} (the \emph{strong approximation theorem})
\begin{equation} \label{eq: LocGlobLatGps}
	 \nu(\gp{K}_{\widetilde{\Gamma}}) = \widehat{\Z}^{\times} \quad \Rightarrow \quad \B^{\times}(\A_{\fin}) = \B^{\times}(\Q) \cdot \gp{K}_{\widetilde{\Gamma}}.
\end{equation}
\begin{remark} \label{rmk: EgNiceLat}
	For example for $\B = \Mat_2$, the following groups satisfy the above assumptions ($N \in \Z_{\geq 1}$):
\begin{itemize}
	\item[(1)] $\Gamma = \left\{ \gamma \in \GL_2(\Z) \ \middle| \ \gamma - \id \in N \cdot \Mat_2(\Z) \right\}$, $\widetilde{\Gamma} = \GL_2(\Z)$, with $\gp{K}_{\Gamma} = \left\{ \kappa \in \GL_2(\widehat{\Z}) \ \middle| \ \kappa - \id \in N \cdot \Mat_2(\widehat{\Z}) \right\}$, $\gp{K}_{\widetilde{\Gamma}} = \GL_2(\widehat{\Z})$;
	\item[(2)] $\Gamma = \widetilde{\Gamma} = \begin{pmatrix} \Z & \Z \\ N \Z & \Z \end{pmatrix} \cap \GL_2(\Z)$, $\gp{K}_{\Gamma} = \gp{K}_{\widetilde{\Gamma}} = \begin{pmatrix} \widehat{\Z} & \widehat{\Z} \\ N \widehat{\Z} & \widehat{\Z} \end{pmatrix} \cap \GL_2(\widehat{\Z})$;
	\item[(3)] We can mix the construction of (1) and (2) locally.
\end{itemize}
\end{remark}
	
	Let $\B^1 < \B^{\times}$ be the sub-$\Q$-group of elements with reduced norm $1$. Let $\Gamma^1 := \Gamma \cap \B^1(\Q)$. Then we have $\Gamma^1 \triangleleft \widetilde{\Gamma}$. For any $\gamma \in \Gamma^1$ we introduce its conjugacy and \emph{stable} conjugacy class as
\begin{equation*}
	[\gamma] := \left\{ \sigma^{-1} \gamma \sigma \ \middle| \ \sigma \in \Gamma^1 \right\}, \quad [\gamma]_{\mathrm{st}} := \left\{ \sigma^{-1} \gamma \sigma \ \middle| \ \sigma \in \B^{\times}(\Q) \right\} \cap \Gamma^1.
\end{equation*}
	Clearly they satisfy the relation (by Skolem--Noether's theorem)
	$$ [\gamma]_{\mathrm{st}} = \sideset{}{_{\Tr(\gamma')=\Tr(\gamma)}} \bigsqcup [\gamma']. $$
	Since $\B_{\infty} \simeq \Mat_2(\R)$, the discrete group $\Gamma^1$, resp. $\pm \Gamma^1 / \{ \pm \id \}$ can be regarded as a lattice in $\SL_2(\R)$, resp. $\PSL_2(\R) = \SL_2(\R) / \{ \pm \id \}$. 
	
	Now consider $\widetilde{f}_{\infty} \in \Cont_c^{\infty}(\PGL_2(\R)//\gO_2(\R))$ ($=\Cont_c^{\infty}(\SL_2(\R)//\SO_2(\R))$) to be a bi-$\gO_2(\R)$-invariant smooth function on $\PGL_2(\R)$. Define an even function $\widetilde{g}_{\infty}$ on $\R$ by the formula
	$$ \int_{\R} \widetilde{f}_{\infty} \left( \begin{pmatrix} 1 & -x \\ & 1 \end{pmatrix} \begin{pmatrix} y & \\ & 1 \end{pmatrix} \begin{pmatrix} 1 & x \\ & 1 \end{pmatrix} \right) \ud x = \frac{\widetilde{g}_{\infty}(\log y)}{\sqrt{y}-\sqrt{y}^{-1}}, \quad \forall \ y > 1. $$
	Then we have for any hyperbolic $\gamma \in \Gamma^1$ with $N(\gamma)=x$ the following formula for orbital integral
\begin{equation} \label{eq: HypOrbInt}
	\int_{\Gamma^1 \R^{\times} \backslash \GL_2(\R)} \left( \sum_{\sigma \in [\gamma]} \widetilde{f}_{\infty}(g^{-1} \sigma g) \right) \ud g = \frac{\log x_0}{\sqrt{x} - \sqrt{x}^{-1}} \cdot \widetilde{g}_{\infty}(\log x), 
\end{equation}
	where $x_0 = x_0([\gamma]) = N(\gamma_0)$ for a \emph{primitive conjugacy class} associated with $\gamma$. Note that the formula \eqref{eq: HypOrbInt} is a group-theoretic reformulation of \cite[Proposition \Rmnum{1}.6.3]{He76}. In particular, $\widetilde{g}_{\infty}$ is the Harish-Chandra transform of $\widetilde{f}_{\infty}$ and can be any even function in $\Cont_c^{\infty}(\R)$ by \cite[Proposition 3.4]{KL13}. For any $t \in \R$ we define
\begin{equation} \label{eq: DiffPGTF}
	\psi_{\Gamma^1}(t) := \frac{1}{1 - x^{-1}} \sum_{[\gamma]: \Tr(\gamma)=t} \log x_0([\gamma]).
\end{equation}
	The sum is also over conjugacy classes contained in a (the only) stable conjugacy class $[\gamma]_{\mathrm{st}}$ with $\Tr(\gamma)=t$.  Since $N(\gamma) = \left( \norm[t] + \sqrt{t^2-4} \right)/2 =: x(t)$ for hyperbolic $\gamma$, we get
\begin{equation} \label{eq: PGTFRel}
	\Psi_{\Gamma^1}(x) = c_{\Gamma^1} \cdot \sum_{2< \norm[t] \leq x^{1/2} + x^{-1/2}} \psi_{\Gamma^1}(t), \quad c_{\Gamma^1} := \frac{[\pm \Gamma^1 : \Gamma^1]}{2} = 
	\begin{cases}
		1 & \text{if } -\id_{\B} \notin \Gamma^1 \\
		1/2 & \text{if } -\id_{\B} \in \Gamma^1
	\end{cases}.
\end{equation}
	The extra factor $c_{\Gamma^1}$ takes into account that we should consider elements in $\PSL_2(\R)$ instead of $\SL_2(\R)$. The sum, being essentially over the so-called \emph{length spectra} of $\Gamma^1 \backslash \ag{H}$, is discrete. 
	
	We propose to study the following \emph{stable} orbital integral (for any hyperbolic $\gamma \in \Gamma^1$ with $\Tr(\gamma)=t$)
\begin{equation} \label{eq: StOrbInt}
	I(t; \widetilde{f}_{\infty}) := \int_{\Gamma^1 \R^{\times} \backslash \GL_2(\R)} \left( \sum_{\sigma \in [\gamma]_{\mathrm{st}}} \widetilde{f}_{\infty}(g^{-1} \sigma g) \right) \ud g = \frac{\widetilde{g}_{\infty}(\log x)}{\sqrt{x}} \cdot \psi_{\Gamma^1} \left( t \right), \quad x = \frac{\norm[t]+\sqrt{t^2-4}}{2}.
\end{equation}
\begin{proposition} \label{prop: StAdeleOrbInt}
	(0) The crucial assumption \eqref{eq: LocGlobLatGps} implies $\B^{\times}(\A_{\fin}) = \B^{\times}(\Q) \cdot \gp{K}_{\widetilde{\Gamma}}$.
	
\noindent (1) Let $\gamma \in \Gamma^1$ be hyperbolic with $\Tr(\gamma) = t$ (hence $\norm[t] > 2$). Such $\gamma$ is $\Q$-elliptic, i.e., the $\Q$-algebra $\Q[\gamma]$ is a (real) quadratic field.
	
\noindent (2) We have the equation
	$$ I(t; \widetilde{f}_{\infty}) = \frac{1}{c_{\Gamma^1}} \frac{\Vol(\widehat{\Z}^{\times})}{[\widehat{\Z}^{\times}:\nu(\gp{K}_{\Gamma})]} \frac{\Vol \left( \gp{Z}_{\gamma}(\Q) \A^{\times} \backslash \gp{Z}_{\gamma}(\A) \right)}{\Vol(\gp{K}_{\Gamma})} \int_{\gp{Z}_{\gamma}(\A) \backslash \B^{\times}(\A)} \left( \widetilde{f}_{\infty} \otimes \id_{\gp{K}_{\Gamma}} \right)(g^{-1} \gamma g) \ud g, $$
	where $\R^{\times} S$ is the image of a subset $S \subset \GL_2(\R)$ under the projection $\GL_2(\R) \to \PGL_2(\R)$, and $\gp{Z}_{\gamma}$ is the centralizer of $\gamma$ in $\B^{\times}(\Q)$ regarded as an algebraic group defined over $\Q$.
\end{proposition}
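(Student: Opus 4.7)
Part (1) follows from elementary rational root arithmetic. Since $\gamma \in \vO_\B^\times$ has reduced norm $\nu(\gamma) = 1$ and reduced trace $t := \Tr(\gamma) \in \Z$, its minimal polynomial over $\Q$ is $X^2 - tX + 1 \in \Z[X]$. By the rational root theorem, any rational root must be $\pm 1$, which would force $t = \pm 2$ and contradict the hyperbolicity hypothesis $|t| > 2$. Hence the polynomial is irreducible over $\Q$, and since its discriminant $t^2 - 4$ is positive, $\Q[\gamma]$ is a real quadratic field.

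For Part (2), the plan is to unfold the adelic stable orbital integral and reduce it to the archimedean integral $I(t; \widetilde{f}_\infty)$. Since the centralizer $\gp{Z}_\gamma$ contains the center of $\B^\times$, we have $\gp{Z}_\gamma(\Q) \A^\times \subset \gp{Z}_\gamma(\A)$, so multiplying the adelic integral by $\Vol(\gp{Z}_\gamma(\Q)\A^\times \backslash \gp{Z}_\gamma(\A))$ enlarges its domain to
\[
A_{\mathrm{big}} := \int_{\gp{Z}_\gamma(\Q) \A^\times \backslash \B^\times(\A)} (\widetilde{f}_\infty \otimes \id_{\gp{K}_\Gamma})(g^{-1}\gamma g) \, \ud g.
\]
The class-number-one hypothesis $\B^\times(\A_{\fin}) = \B^\times(\Q) \cdot \gp{K}_{\widetilde{\Gamma}}$ upgrades to $\B^\times(\A) = \B^\times(\Q) \cdot (\GL_2(\R) \times \gp{K}_{\widetilde{\Gamma}})$, with ambiguity given by the diagonal action of $\widetilde{\Gamma} = \B^\times(\Q) \cap \gp{K}_{\widetilde{\Gamma}}$. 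Parameterize $g = \tau \cdot (h_\infty, k_f)$: the integrand becomes $\widetilde{f}_\infty(h_\infty^{-1}\tau^{-1}\gamma\tau h_\infty) \cdot \id_{\gp{K}_\Gamma}(k_f^{-1}\tau^{-1}\gamma\tau k_f)$, and the normality $\gp{K}_\Gamma \triangleleft \gp{K}_{\widetilde{\Gamma}}$ collapses the finite-part indicator to $\id_{\gp{K}_\Gamma}(\tau^{-1}\gamma\tau)$. This forces $\tau^{-1}\gamma\tau \in \B^\times(\Q) \cap \gp{K}_\Gamma = \Gamma^1$, i.e., $\tau^{-1}\gamma\tau \in [\gamma]_{\mathrm{st}}$. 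Summing over $\tau \in \gp{Z}_\gamma(\Q) \backslash \B^\times(\Q) / \widetilde{\Gamma}$ reproduces the stable-class sum defining $I(t; \widetilde{f}_\infty)$, while integrating $h_\infty$ against the archimedean Haar measure recovers the LHS via \eqref{eq: StOrbInt}.

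The remaining task is the careful bookkeeping of all measure normalizations. The $k_f$-integration contributes $\Vol(\gp{K}_\Gamma)$ to the denominator (the nonzero locus is a $\gp{K}_\Gamma$-coset inside $\gp{K}_{\widetilde{\Gamma}}$); the central factor $\widehat{\Z}^\times = \gp{K}_{\widetilde{\Gamma}} \cap \A_{\fin}^\times$ contributes $\Vol(\widehat{\Z}^\times)$ to the numerator; the index $[\widehat{\Z}^\times : \nu(\gp{K}_\Gamma)]$ encodes the discrepancy between the central-idele projection $\A^\times$ and $\gp{K}_\Gamma$ through the reduced norm; and the factor $c_{\Gamma^1}^{-1}$ records whether $-\id_\B \in \Gamma^1$, arising from the passage $\GL_2(\R) \to \PGL_2(\R)$ at infinity. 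The main obstacle is precisely this bookkeeping: one must ensure that the stabilizers $\widetilde{\Gamma} \cap \tau^{-1}\gp{Z}_\gamma(\Q)\tau$ arising in the double-coset parameterization interact correctly with the quotient $\Gamma^1 \backslash \widetilde{\Gamma}$ (mediated by $\nu$) so that each element of $[\gamma]_{\mathrm{st}}$ is counted exactly once with the weight appearing in \eqref{eq: DiffPGTF}.
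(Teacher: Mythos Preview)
Your Part~(1) is correct and in fact more explicit than the paper, which simply cites \cite[Lemma~2.8]{CWZ21}.

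For Part~(2), your overall strategy coincides with the paper's: both use the class-number-one hypothesis to pass between the archimedean quotient $\widetilde{\Gamma}\R^{\times}\backslash\GL_2(\R)$ and the adelic quotient, and both identify the stable-class condition $\tau^{-1}\gamma\tau\in[\gamma]_{\mathrm{st}}$ with the indicator $\id_{\gp{K}_{\Gamma}}(\tau^{-1}\gamma\tau)$ via normality. The paper runs the argument in the opposite direction (starting from $I(t;\widetilde{f}_{\infty})$ and thickening to adeles), but this is cosmetic.

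The genuine gap is that you have correctly located the hard part---the volume bookkeeping---and then not done it. Two points deserve attention. First, your attribution of individual factors is imprecise: the $k_f$-integration over $\gp{K}_{\widetilde{\Gamma}}$ of a function that is \emph{constant} in $k_f$ (by normality) yields $\Vol(\A_{\fin}^{\times}\backslash\A_{\fin}^{\times}\gp{K}_{\widetilde{\Gamma}})$, not $\Vol(\gp{K}_{\Gamma})$; the factor $\Vol(\gp{K}_{\Gamma})$ in the statement arises only after one relates $\Vol(\gp{K}_{\widetilde{\Gamma}})$ to $\Vol(\gp{K}_{\Gamma})$ via indices. Second, and more substantively, the paper's computation hinges on a step you do not mention: strong approximation for $\B^1$ (not $\B^{\times}$), which gives $\gp{K}_{\widetilde{\Gamma}}^1/\gp{K}_{\Gamma}^1\simeq\widetilde{\Gamma}^1/\Gamma^1$. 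This, combined with the short exact sequence
\[
1 \to \gp{K}_{\widetilde{\Gamma}}^1/\gp{K}_{\Gamma}^1 \to \gp{K}_{\widetilde{\Gamma}}/\gp{K}_{\Gamma} \xrightarrow{\nu} \widehat{\Z}^{\times}/\nu(\gp{K}_{\Gamma}) \to 1
\]
and the identity $[\widetilde{\Gamma}:\widetilde{\Gamma}^1]=|\nu(\widetilde{\Gamma})|=2$, is what produces the constant $c_{\Gamma^1}^{-1}[\widehat{\Z}^{\times}:\nu(\gp{K}_{\Gamma})]^{-1}$. Your reference to ``stabilizers $\widetilde{\Gamma}\cap\tau^{-1}\gp{Z}_{\gamma}(\Q)\tau$'' is a red herring: the paper never needs to analyze these individually, because it unfolds $\sigma\in\gp{Z}_{\gamma}(\Q)\backslash\B^{\times}(\Q)$ directly rather than via a double-coset decomposition.
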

\begin{proof}
	(0) This follows directly from the \emph{strong approximation theorem} \cite[Theorem 28.5.3]{Vo21}.

\noindent (1) This is a generalization of \cite[Lemma 2.8]{CWZ21}. The proof given there remains valid.

\noindent (2) Note that the integrand is a function in $g$ left invariant by $\widetilde{\Gamma}$, since $[\gamma]_{\mathrm{st}}$ is stable by conjugation by $\widetilde{\Gamma}$. Hence we get
	$$ I(t; \widetilde{f}_{\infty}) = [\R^{\times} \widetilde{\Gamma} : \R^{\times} \Gamma^1] \cdot \int_{\widetilde{\Gamma} \R^{\times} \backslash \GL_2(\R)} \left( \sum_{\sigma \in [\gamma]_{\mathrm{st}}} \widetilde{f}_{\infty}(g^{-1} \sigma g) \right) \ud g. $$
	For any $\sigma \in \B^{\times}(\Q)$ we have $\nu(\sigma^{-1} \gamma \sigma) = \nu(\gamma) = 1$. We deduce
	$$ \sigma^{-1} \gamma \sigma \in [\gamma]_{\mathrm{st}} \ \Leftrightarrow \ \sigma^{-1} \gamma \sigma \in \Gamma \ \Leftrightarrow \ \sigma^{-1} \gamma \sigma \in \gp{K}_{\Gamma}. $$
	Therefore we can rewrite (in the second line $\widetilde{\Gamma}$ can be viewed as subgroup of either $\GL_2(\R)$ or $\B^{\times}(\A)$)
\begin{multline*}
	I(t; \widetilde{f}_{\infty}) = [\R^{\times} \widetilde{\Gamma} : \R^{\times} \Gamma^1] \cdot \int_{\widetilde{\Gamma} \R^{\times} \backslash \GL_2(\R)} \left( \sum_{\sigma \in \gp{Z}_{\gamma}(\Q) \backslash \B^{\times}(\Q)} \widetilde{f}_{\infty}(g^{-1} \sigma^{-1} \gamma \sigma g) \cdot \id_{\gp{K}_{\Gamma}}(\sigma^{-1} \gamma \sigma) \right) \ud g \\
	= \frac{[\R^{\times} \widetilde{\Gamma} : \R^{\times} \Gamma^1]}{\Vol(\A_{\fin}^{\times} \backslash \A_{\fin}^{\times}\gp{K_{\widetilde{\Gamma}}})} \cdot \int_{\widetilde{\Gamma} \A^{\times} \backslash \left( \GL_2(\R) \times \A_{\fin}^{\times}\gp{K}_{\widetilde{\Gamma}} \right)} \left( \sum_{\sigma \in \gp{Z}_{\gamma}(\Q) \backslash \B^{\times}(\Q)} \left( \widetilde{f}_{\infty} \otimes \id_{\gp{K}_{\Gamma}} \right)(g^{-1} \sigma^{-1} \gamma \sigma g) \right) \ud g.
\end{multline*}
	Recall $\B^{\times}(\A_{\fin}) = \B^{\times}(\Q) \cdot \gp{K}_{\widetilde{\Gamma}}$, $\widehat{\Z}^{\times} < \gp{K}_{\widetilde{\Gamma}}$ by assumption and $\A_{\fin}^{\times} = \Q^{\times} \widehat{\Z}^{\times}$. We deduce $\B^{\times}(\Q) \cap \A_{\fin}^{\times}\gp{K}_{\widetilde{\Gamma}} = \Q^{\times} \left( \B^{\times}(\Q) \cap \gp{K}_{\widetilde{\Gamma}} \right) = \Q^{\times} \widetilde{\Gamma}$ or $\B^{\times}(\Q) \A_{\fin}^{\times} \cap \A_{\fin}^{\times}\gp{K}_{\widetilde{\Gamma}} = \widetilde{\Gamma} \A_{\fin}^{\times}$ and continue to rewrite $I(t; \widetilde{f}_{\infty})$ as
\begin{multline*}
	 \frac{[\R^{\times} \widetilde{\Gamma} : \R^{\times} \Gamma^1]}{\Vol(\A_{\fin}^{\times} \backslash \A_{\fin}^{\times}\gp{K_{\widetilde{\Gamma}}})} \cdot \int_{\B^{\times}(\Q) \A^{\times} \backslash \left( \GL_2(\R) \times \B^{\times}(\A_{\fin}) \right)} \left( \sum_{\sigma \in \gp{Z}_{\gamma}(\Q) \backslash \B^{\times}(\Q)} \left( \widetilde{f}_{\infty} \otimes \id_{\gp{K}_{\Gamma}} \right)(g^{-1} \sigma^{-1} \gamma \sigma g) \right) \ud g \\
	= \frac{[\R^{\times} \widetilde{\Gamma} : \R^{\times} \Gamma^1]}{\Vol(\A_{\fin}^{\times} \backslash \A_{\fin}^{\times}\gp{K_{\widetilde{\Gamma}}})} \cdot \int_{\gp{Z}_{\gamma}(\Q) \A^{\times} \backslash \B^{\times}(\A)} \left( \widetilde{f}_{\infty} \otimes \id_{\gp{K}_{\Gamma}} \right)(g^{-1} \gamma g) \ud g \\
	= [\R^{\times} \widetilde{\Gamma} : \R^{\times} \Gamma^1] \frac{\Vol(\gp{K_{\Gamma}})}{\Vol(\A_{\fin}^{\times} \backslash \A_{\fin}^{\times}\gp{K_{\widetilde{\Gamma}}})} \cdot \frac{\Vol \left( \gp{Z}_{\gamma}(\Q) \A^{\times} \backslash \gp{Z}_{\gamma}(\A) \right)}{\Vol(\gp{K}_{\Gamma})} \int_{\gp{Z}_{\gamma}(\A) \backslash \B^{\times}(\A)} \left( \widetilde{f}_{\infty} \otimes \id_{\gp{K}_{\Gamma}} \right)(g^{-1} \gamma g) \ud g.
\end{multline*}
	If we denote by $\gp{K}_{\widetilde{\Gamma}}^1 := \gp{K}_{\widetilde{\Gamma}} \cap \B^1(\A_{\fin})$ and $\gp{K}_{\Gamma}^1 := \gp{K}_{\Gamma} \cap \B^1(\A_{\fin})$, then we have a short exact sequence of (finite) groups and an equation
	$$ 1 \to \gp{K}_{\widetilde{\Gamma}}^1/\gp{K}_{\Gamma}^1 \to \gp{K}_{\widetilde{\Gamma}}/\gp{K}_{\Gamma} \xrightarrow{\nu} \widehat{\Z}^{\times}/\nu(\gp{K}_{\Gamma}) \to 1 \quad \Rightarrow \quad \frac{\Vol(\gp{K_{\Gamma}})}{\Vol(\gp{K_{\widetilde{\Gamma}}})} = \frac{\Vol(\gp{K_{\Gamma}}^1)}{\Vol(\gp{K_{\widetilde{\Gamma}}}^1)} \cdot \frac{1}{[\widehat{\Z}^{\times} : \nu(\gp{K}_{\Gamma})]}. $$
	The strong approximation in $\B^1$ \cite[Theorem 28.5.3]{Vo21}, implying $\B^1(\A_{\fin}) = \B^1(\Q) \gp{K}_{\widetilde{\Gamma}}^1 = \B^1(\Q) \gp{K}_{\Gamma}^1$, yields
	$$ \gp{K}_{\widetilde{\Gamma}}^1/ \gp{K}_{\Gamma}^1 \simeq \left( \gp{K}_{\widetilde{\Gamma}}^1 \cap \B^1(\Q) \right) / \left( \gp{K}_{\Gamma}^1 \cap \B^1(\Q) \right) = \widetilde{\Gamma}^1 / \Gamma^1 \quad \Rightarrow \quad \frac{\Vol(\gp{K_{\Gamma}}^1)}{\Vol(\gp{K_{\widetilde{\Gamma}}}^1)} = \frac{1}{[\widetilde{\Gamma}^1 : \Gamma^1]}. $$
	We then conclude the stated equation by
	$$ [\R^{\times} \widetilde{\Gamma} : \R^{\times} \Gamma^1] \frac{\Vol(\gp{K_{\Gamma}})}{\Vol(\A_{\fin}^{\times} \backslash \A_{\fin}^{\times}\gp{K_{\widetilde{\Gamma}}})} = \frac{[\widetilde{\Gamma} : \pm \Gamma^1]}{[\widetilde{\Gamma}^1 : \Gamma^1]} \frac{\Vol(\widehat{\Z}^{\times})}{[\widehat{\Z}^{\times}:\nu(\gp{K}_{\Gamma})]} = \frac{1}{c_{\Gamma^1}} \frac{\Vol(\widehat{\Z}^{\times})}{[\widehat{\Z}^{\times}:\nu(\gp{K}_{\Gamma})]}, $$
	where we have used $[\widetilde{\Gamma} : \widetilde{\Gamma}^1] = \norm[\nu(\widetilde{\Gamma})] = \norm[\Z^{\times}] = 2$.
\end{proof}	

\begin{remark} \label{rmk: B1NotB}
	Note that $[\widehat{\Z}^{\times}:\nu(\gp{K}_{\Gamma})] \cdot \Vol(\gp{K}_{\Gamma}) = \Vol \left( \gp{K}_{\Gamma}^1 \right)$, and the formula in Proposition \ref{prop: StAdeleOrbInt} (2) only uses the values of $\widetilde{f}_{\infty} \otimes \id_{\gp{K}_{\Gamma}}$ on $\B^1(\A)$. Hence this formula is essentially for $\B^1$, although we have written it for a test function on $\B^{\times}(\A)$.
\end{remark}

\section{Proof of Main Theorem}

	Let $\Ram(\D)$ be the set of ramified places of $\D$. Given $\idlN$, there is a sequence of $n_p \in \Z_{\geq 0}$, with $n_p=0$ for all but finitely many $p$, so that the completion $\idlN_p$ of $\idlN$ at $p$ is the $n_p$-th power of the Jacobson radical of the completion $\vO_{\D,p}$ of $\vO_{\D}$ at primes $p$. Write the functions $f_n$, resp. $g_n$, resp. $\varphi_n$ in Definition \ref{def: PrinpChF} (3) for $\F = \Q_p$ as $f_{n,p}$, resp. $g_{n,p}$, resp. $\varphi_{n,p}$. Define
	$$ \varphi_{\D} := \widetilde{f}_{\infty} \bigotimes_{p \in \Ram(\D)} \varphi_{n_p,p} \bigotimes_{p \notin \Ram(\D)} f_{n_p,p}. $$
	It follows from Proposition \ref{prop: StAdeleOrbInt} that
\begin{equation} \label{eq: StQuat}
	c_{\Gamma_{\D}^1(\idlN)} \cdot \frac{\widetilde{g}_{\infty}(\log x)}{\sqrt{x}} \cdot \psi_{\Gamma_{\D}^1(\idlN)} \left( t \right) = \\
	\Vol(\widehat{\Z}^{\times}) \Vol \left( \gp{Z}_{\gamma}(\Q) \A^{\times} \backslash \gp{Z}_{\gamma}(\A) \right) \int_{\gp{Z}_{\gamma}(\A) \backslash \D^{\times}(\A)} \varphi_{\D}(g^{-1} \gamma g) \ud g.
\end{equation}
	Write the local matching in Proposition \ref{prop: ArithMatch} as
	$$ \varphi_{n_p,p} \leftrightarrow a_p \cdot f_{n_p^1,p} + b_p \cdot g_{n_p^2,p}. $$
	We get the global matching of test functions
	$$ \varphi_{\D} \leftrightarrow \sum_{I \subset \Ram(\D)} \left( \prod_{p \in I} a_p \prod_{p \in \Ram(\D)-I} b_p \right) \cdot f_I, \quad f_I := \widetilde{f}_{\infty} \bigotimes_{p \in I} f_{n_p^1,p} \bigotimes_{p \in \Ram(\D)-I} g_{n_p^2,p} \bigotimes_{p \notin \Ram(\D)} f_{n_p,p}. $$
	For each $I \subset \Ram(\D)$, let $\vO_I$ be the Eichler order, whose completion $\vO_{I,p}$ at $p \in I$ is $\oM_p := \Mat_2(\Z_p)$ and at $p \in \Ram(\D)-I$ is $\oJ_p := \begin{pmatrix} \Z_p & \Z_p \\ p \Z_p & \Z_p \end{pmatrix}$. Let $\idlN_I$ be the two sided ideal of $\vO_I$, whose completion at $p \in I$, resp. $p \notin \Ram(\D)$, resp. $p \in \Ram(\D)-I$ is the $n_p^1$-th, resp. $n_p$-th, resp. $n_p^2$-th power of the Jacobson radical of $\vO_{I,p}$. Define
	$$ \Gamma^1(\idlN_I) := \left\{ x \ \middle| \ x \in \vO_I^{\times}, \det(x)=1, x - \id \in \idlN_I \right\}, \quad \Gamma(\idlN_I) := \{ \pm \id \} \cdot \Gamma^1(\idlN_I). $$
	It follows from Proposition \ref{prop: StAdeleOrbInt} (taking into account Remark \ref{rmk: EgNiceLat} (3)) again that
\begin{equation} \label{eq: StMat}
	c_{\Gamma^1(\idlN_I)} \cdot \frac{\widetilde{g}_{\infty}(\log x)}{\sqrt{x}} \cdot \psi_{\Gamma^1(\idlN_I)} \left( t \right) = \Vol(\widehat{\Z}^{\times}) \Vol \left( \gp{Z}_{\gamma}(\Q) \A^{\times} \backslash \gp{Z}_{\gamma}(\A) \right) \int_{\gp{Z}_{\gamma}(\A) \backslash \GL_2(\A)} f_I(g^{-1} \gamma g) \ud g.
\end{equation}
	Inserting \eqref{eq: StQuat} and \eqref{eq: StMat} into the equation for the global matching integrals
\begin{equation} \label{eq: GlobMatch}
	\int_{\gp{Z}_{\gamma}(\A) \backslash \D^{\times}(\A)} \varphi_{\D}(g^{-1} \gamma g) \ud g = \sum_{I \subset \Ram(\D)} \left( \prod_{p \in I} a_p \prod_{p \in \Ram(\D)-I} b_p \right) \cdot \int_{\gp{Z}_{\gamma}(\A) \backslash \GL_2(\A)} f_I(g^{-1} \gamma g) \ud g,
\end{equation}
	we obtain
\begin{equation*}
	c_{\Gamma_{\D}^1(\idlN)} \cdot \frac{\widetilde{g}_{\infty}(\log x)}{\sqrt{x}} \cdot \psi_{\Gamma_{\D}^1(\idlN)} \left( t \right) = \sum_{I \subset \Ram(\D)} \left( \prod_{p \in I} a_p \prod_{p \in \Ram(\D)-I} b_p \right) c_{\Gamma^1(\idlN_I)} \cdot \frac{\widetilde{g}_{\infty}(\log x)}{\sqrt{x}} \cdot \psi_{\Gamma^1(\idlN_I)} \left( t \right),
\end{equation*}
	or equivalently by the arbitrariness of $\widetilde{g}_{\infty} \in \Cont_c^{\infty}(\R)$ (and even)
\begin{equation} \label{eq: dPGTEq}
	c_{\Gamma_{\D}^1(\idlN)} \cdot \psi_{\Gamma_{\D}^1(\idlN)} \left( t \right) = \sum_{I \subset \Ram(\D)} \left( \prod_{p \in I} a_p \prod_{p \in \Ram(\D)-I} b_p \right) c_{\Gamma^1(\idlN_I)} \cdot \psi_{\Gamma^1(\idlN_I)} \left( t \right).
\end{equation}
	Summing \eqref{eq: dPGTEq} over $t$ we get by \eqref{eq: PGTFRel}
\begin{equation} \label{eq: PsiRel}
	\Psi_{\Gamma_{\D}(\idlN)}(x) = \sum_{I \subset \Ram(\D)} \left( \prod_{p \in I} a_p \prod_{p \in \Ram(\D)-I} b_p \right) \Psi_{\Gamma(\idlN_I)}(x).
\end{equation}
	Now that the prime geodesic theorem holds for $\Gamma(\idlN_I)$ by \cite[Corollary 1.2]{LRS95} in the form
\begin{equation} \label{eq: PGTMat}
	\Psi_{\Gamma(\idlN_I)}(x) = x + O_{\epsilon}(x^{\frac{7}{10}+\epsilon}),
\end{equation}
	we conclude the proof of Theorem \ref{thm: Main} for $\Psi_{\Gamma_{\D}(\idlN)}$ by the crucial identity
	$$ \sum_{I \subset \Ram(\D)} \left( \prod_{p \in I} a_p \prod_{p \in \Ram(\D)-I} b_p \right) = \prod_{p \in \Ram(\D)} (a_p+b_p) = 1, $$
	since we have $a_p+b_p=1$ by observing Proposition \ref{prop: ArithMatch}. Finally to derive the statement for $\pi_{\Gamma_{\D}(\idlN)}$ we simply do an integration by parts (or abel summation).

%

\begin{thebibliography}{10}

\bibitem{BuH06}
{\sc Bushnell, C.~J., and Henniart, G.}
\newblock {\em The Local Langlands Conjecture for GL(2)}.
\newblock No.~335 in Grundlehren der mathematischen Wissenschaften.
  Springer-Verlag, 2006.

\bibitem{Cai02}
{\sc Cai, Y.}
\newblock Prime geodesic theorem.
\newblock {\em J. Th\'eor. Nombres Bordeaux 14}, 1 (2002), 59--72.

\bibitem{CWZ21}
{\sc Cherubini, G., Wu, H., and Z\'abr\'adi, G.}
\newblock On {Kuznetsov-Bykovskii's} formula of counting prime geodesics.
\newblock {\em Math. Z. 300\/} (2022), 881--928.

\bibitem{GJ79}
{\sc Gelbart, S.~S., and Jacquet, H.}
\newblock Forms of {GL}(2) from the analytic point of view.
\newblock In {\em Proceedings of Symposia in Pure Mathematics\/} (1979),
  vol.~33, pp.~213--251.

\bibitem{He76_Survey}
{\sc Hejhal, D.~A.}
\newblock The {Selberg} trace formula and the {Riemann} zeta function.
\newblock {\em Duke Math. J. 43}, 3 (1976), 441--482.

\bibitem{He76}
{\sc Hejhal, D.~A.}
\newblock {\em The Selberg Trace Formula for {${\rm PSL}(2, \mathbb{R})$}},
  vol.~548 of {\em Lecture Notes in Mathematics}.
\newblock Springer-Verlag, 1976.

\bibitem{Iw84}
{\sc Iwaniec, H.}
\newblock Prime geodesic theorem.
\newblock {\em J. Reine Angew. Math. 1984}, 349 (1984), 136--159.

\bibitem{KL13}
{\sc Knightly, A., and Li, C.}
\newblock {\em Kuznetsov's trace formula and the {H}ecke eigenvalues of Maass
  forms}, vol.~224 of {\em Mem. Amer. Math. Soc.}
\newblock American Mathematical Society, July 2013.

\bibitem{Koy98}
{\sc Koyama, S.}
\newblock Prime geodesic theorem for arithmetic compact surfaces.
\newblock {\em Internat. Math. Res. Notices}, 8 (1998), 383--388.

\bibitem{LRS95}
{\sc Luo, W., Rudnick, Z., and Sarnak, P.}
\newblock On {Selberg's} eigenvalue conjecture.
\newblock {\em Geom. Funct. Anal. 5}, 2 (1995), 387--401.

\bibitem{SY13}
{\sc Soundararajan, K., and Young, M.~P.}
\newblock The prime geodesic theorem.
\newblock {\em J. Reine Angew. Math. 2013}, 676 (2013), 105--120.

\bibitem{Vi80}
{\sc Vign\'{e}ras, M.-F.}
\newblock {\em Arithm\'{e}tique des Alg\`{e}bres de Quaternions}.
\newblock No.~800 in Lecture Notes in Mathematics. Springer-Verlag, 1980.

\bibitem{Vo21}
{\sc Voight, J.}
\newblock {\em Quaternion Algebras}, vol.~288 of {\em Graduate Texts in
  Mathematics}.
\newblock Springer (Open Access), 2021.

\bibitem{Wu1}
{\sc Wu, H.}
\newblock Subconvex bounds for compact toric integrals.
\newblock under revision, arXiv: 1604.01902, July 2018.

\end{thebibliography}

\end{document}